\theoremstyle{plain}
\newtheorem{lem}{Lemma}
\newtheorem{thm}{Theorem}
\newtheorem{coroll}{Corollary}
\newtheorem{prop}{Proposition}
\theoremstyle{definition}
\newtheorem{remark}{Remark}
\newtheorem{example}{Example}
\def\ztt{\zeta^{t}}
\def\zt{\zeta}
\def\zts{\zeta^{\star}}
\def\C{\mathbf C}
\newcommand{\Ha}[2]{\ensuremath{H_{#1}^{(#2)}}}
\newcommand{\N}{\ensuremath{\mathbb{N}}}
\newcommand{\E}{\ensuremath{\mathbb{E}}}
\newcommand{\fallfak}[2]{\ensuremath{#1^{\underline{#2}}}}
\newcommand{\stir}[2]{\genfrac{ [ }{ ] }{0pt}{}{#1}{#2}}
\DeclareMathOperator{\Exp}{\text{Exp}}
\DeclareMathOperator{\Li}{\text{Li}}
\begin{document}
\title{A Note on Harmonic number identities, Stirling series and multiple zeta values}%
\author{Markus Kuba and Alois Panholzer}
\date{\today}

\maketitle
\begin{abstract}
We study a general type of series and relate special cases of it to Stirling series, infinite series discussed by Choi and Hoffman, and also to special values of the Arakawa-Kaneko zeta function, complementing and generalizing earlier results. %In this work we generalize results of Choi, Hoffman, Batir, Spie\ss{}, and Prodinger and the author. 
Moreover, we survey properties of certain truncated multiple zeta and zeta star values, pointing out their relation to finite sums of harmonic numbers. We also discuss the duality result of Hoffman, relating binomial sums and truncated multiple zeta star values.
\end{abstract}

\emph{Keywords:} Multiple zeta values, Multiple zeta star values, Stirling series, Harmonic numbers, Arakawa-Kaneko zeta function. \\
\indent\emph{2010 Mathematics Subject Classification} 11M32.

\section{Introduction}
The multiple zeta values and their truncated counterparts are defined by
\[
\zt(i_1,\dots,i_k)=\sum_{n_1>\cdots>n_k\ge 1}\frac1{n_1^{i_1}\cdots n_k^{i_k}},
\]
with admissible indices $(i_1,\dots,i_k)$ satisfying $i_1\ge 2$, $i_j\ge 1$ for $2\le j\le k$,
and 
\[
\zt_N(i_1,\dots,i_k)=\sum_{N\ge n_1>\cdots>n_k\ge 1}\frac1{n_1^{i_1}\cdots n_k^{i_k}}.
\]
We refer to $i_1 +\dots + i_k$ as the weight of this multiple zeta value, and $k$ as its depth. 
An important variant of the (truncated) multiple zeta values are the star values, where equality is allowed:
\[
\begin{split}
  \zts(i_1,\dots,i_k) & = \sum_{n_1\ge \cdots\ge n_k\ge 1}\frac1{n_1^{i_1}\cdots n_k^{i_k}} \quad \text{and}\\
  \zts_{N}(i_1,\dots,i_k) & =\sum_{N \ge n_1\ge \cdots\ge n_k\ge 1}\frac1{n_1^{i_1}\cdots n_k^{i_k}}.
\end{split}
\]

In this article we are concerned with the series $S=S(\ell_1,\ell_2,r_1,r_2)$ and the truncated series $S_{N} = S_{N}(\ell_{1}, \ell_{2}, r_{1}, r_{2})$, defined for non-negative integers $\ell_1,\ell_2,r_1,r_2$ by
\begin{equation}
\label{eqn:sumS}
  S = \sum_{n\ge 1}\frac{\zts_{n-1}(\{1\}_{\ell_1})\zt_{n-1}(\{1\}_{\ell_2})}{\binom{n+r_1}{r_1}n^{r_2}} \quad \text{and} \quad
	S_{N} = \sum_{1 \le n \le N}\frac{\zts_{n-1}(\{1\}_{\ell_1})\zt_{n-1}(\{1\}_{\ell_2})}{\binom{n+r_1}{r_1}n^{r_2}},
\end{equation}
respectively, where $\{1\}_m$ means 1 repeated $m$ times. Throughout this work we use the convention $\zts_{n}(\{1\}_{0})=\zt_n(\{1\}_{0})=1$, for arbitrary $n$.
For $r_1+r_2\ge 2$ the sum converges and we are interested in evaluating the sum into multiple zeta values and its variants. 
We will show that the sum $S$ is closely related to several series previously discussed in the literature. 
In particular, we relate $S$ to Stirling series, Euler sums of a certain form and also to the Arakawa-Kaneko zeta function,
complementing and generalizing earlier results. Besides, we collect and survey properties of the truncated multiple zeta values
$\zt_n(\{1\}_k)$ and $\zts_n(\{1\}_k)$, as well as of binomial sums $\sum_{n=1}^{N}\binom{N}{n}\frac{(-1)^{a_1-1}}{n^{a_1}}\zeta^{\ast}_{n}(a_2,\dots,a_r)$.

\medskip

Let $\stir{n}{k}$ denote the unsigned Stirling numbers $\stir{n}{k}$ of the first kind, also called Stirling cycle numbers.
They count the number of permutations of $n$ elements with $k$ cycles~\cite{GKP} and appear as coefficients in the expansions
\begin{equation*}
\fallfak{x}n=\sum_{k=0}^{n}(-1)^{n-k}\stir{n}k x^k=\sum_{k=0}^{n}s(n,k) x^k,
\end{equation*}
relating ordinary powers $x^n$ to the so-called falling factorials $\fallfak{x}{n}=x(x-1)\dots(x-(n-1))$, for integers $n \ge 1$, and $\fallfak{x}{0}=1$. The definition can be extended to negative integers via $\fallfak{x}{-n} = \frac{1}{\fallfak{(x+n)}{n}}$, $n \ge 1$.
Here $s(n,k)$ denote the signed Stirling numbers. Stirling series of the form
\begin{equation}
\label{DefStir}
\sum_{n\ge 1}\frac{\stir{n}{d}}{n!\binom{n+s}{s}n^{r}}
\end{equation}
were studied, amongst many others, by Adamchik~\cite{A}, by the author and Prodinger~\cite{PK2010} and recently by Lyu and Wang~\cite{LW2018}. 

\smallskip

Choi~\cite{Ch} studied Euler sums of a certain form. Hoffman~\cite{H2017} generalized the results of~\cite{Ch} by 
considering two sequences of multivariate polynomials: the polynomials $P_k := P_{k}(x_{1}, \dots, x_{k})$ start with
\[
P_1(x_1) = x_1,\quad P_2(x_1, x_2) =\frac12 (x_1^2-x_2),\quad
P_3(x_1, x_2, x_3) =\frac16 (x_1^3-3x_1x_2+2x_3),\dots
\]
In fact, it holds
\[
P_k(p_1, p_2, \dots , p_k) = e_k,
\]
where, for arbitrary $n$, $p_{i} := p_i(x_{1},\dots,x_{n}) = \sum_{j=1}^{n} x_{j}^{i}$ is the $i$th power sum and $e_k := e_{k}(x_{1},\dots,x_{n}) = [x^{n-k}]\prod_{j=1}^{n}(x+x_{j})$ is the $k$th elementary symmetric function.

The polynomials $Q_k := Q_{k}(x_{1},\dots,x_{k})$ are simply obtained from the $P_k$ by skipping the signs of the coefficients, thus expressing the complete symmetric functions in terms of power sums.
Amongst others, variants of the following type of sums have been discussed in~\cite{H2017} for various pairs of $(r,s)$:
\begin{equation}
\label{DefHoffmanSum}
\sum_{n\ge 1}\frac{Q_\ell(H_n,\Ha{n}{2},\dots,\Ha{n}{\ell})P_k(H_n,\Ha{n}{2},\dots,\Ha{n}{k})}{\binom{n+s}{s} n^{r}}.
\end{equation}
Here and throughout this work we denote with $\Ha{n}{s}=\sum_{k=1}^n \frac{1}{k^s}=\zeta_n(s)$ the $n$th generalized harmonic number of order $s$ and with $H_n=\Ha{n}{1}=\zeta_n(1)$ the $n$th ordinary harmonic number. In particular, Hoffman studied the cases $(2,0)$ for arbitrary $\ell,k$ and $(0,s)$ for $\ell=0$, reducing the sums to ordinary zeta values. 

\smallskip

Furthermore, let $\Li_{i_1,\dots,i_k}(z)$ denote the multiple polylogarithm function %with parameters $k_1,\dots,k_r$,
defined by
\begin{equation*}
\Li_{i_1,\dots,i_k}(z)=\sum_{n_1>n_{2}>\dots>n_k\ge 1}\frac{z^{n_1}}{n_1^{i_1}n_2^{i_2}\dots n_{k}^{i_k}},
\end{equation*}
with $i_1\in\N\setminus\{1\}$ and $i_j\in\N=\{1,2,\dots\}$, $2\le j\le k$, and $|z|\le 1$.
Arakawa and Kaneko~\cite{AK1999} introduced and studied the functions $\xi_{r}(s)$ and $\xi_{i_1,\dots,i_k}(s)$,
defined by
\begin{equation*}
\begin{split}
\xi_{r}(s)&=\frac{1}{\Gamma(s)}\int_{0}^{\infty}\frac{t^{s-1}}{e^t-1}\Li_{r}(1-e^{-t})dt,\\
\xi_{i_1,\dots,i_k}(s)&=\frac{1}{\Gamma(s)}\int_{0}^{\infty}\frac{t^{s-1}}{e^t-1}\Li_{i_1,\dots,i_k}(1-e^{-t})dt,
\end{split}
\end{equation*}
respectively, being absolutely convergent for $\Re(s)>0$. Arakawa and Kaneko related $\xi_{r}(s)$ and $\xi_{i_1,\dots,i_k}(s)$ for several choices of $s\in\C$, and $r \in \N$ and $i_1,\dots i_k\in\N$, respectively, to multiple zeta values. 

\medskip

\section{Main results}
Our main results are evaluations of $S$ as defined in \eqref{eqn:sumS} and its truncated counterparts, as well as their relations to other sums. Our first observation is that sums of Hoffman's type~\eqref{DefHoffmanSum} and the sum $S$ are almost identical, except for a shift in the arguments of the truncated zeta functions.
\begin{thm}
\label{thmHoffman}
The series $S=\sum_{n\ge 1}\frac{\zts_{n-1}(\{1\}_{\ell_1})\zt_{n-1}(\{1\}_{\ell_2})}{\binom{n+r_1}{r_1}n^{r_2}}$ satisfies:
\[
S=\sum_{n\ge 1}\frac{Q_{\ell_1}(H_{n-1},\Ha{n-1}{2},\dots,\Ha{n-1}{\ell_1})P_{\ell_2}(H_{n-1},\Ha{n-1}{2},\dots,\Ha{n-1}{\ell_2})}{\binom{n+r_1}{r_1} n^{r_2}}.
\]
\end{thm}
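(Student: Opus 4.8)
The plan is to reduce Theorem~\ref{thmHoffman} to a term-by-term identity between the summands of the two series. Since the denominator $\binom{n+r_1}{r_1}n^{r_2}$ is the same on both sides, it suffices to prove, for every $n\ge1$,
\[
\zt_{n-1}(\{1\}_{\ell_2}) = P_{\ell_2}(H_{n-1},\Ha{n-1}{2},\dots,\Ha{n-1}{\ell_2})
\quad\text{and}\quad
\zts_{n-1}(\{1\}_{\ell_1}) = Q_{\ell_1}(H_{n-1},\Ha{n-1}{2},\dots,\Ha{n-1}{\ell_1}).
\]
Dividing these by $\binom{n+r_1}{r_1}n^{r_2}$ and summing over $n\ge1$ then gives the theorem; in particular the right-hand series converges precisely when $S$ does (for $r_1+r_2\ge2$), and the identical argument covers the truncated series $S_N$.

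To establish the two displayed identities I would use the interpretation of the single-bar truncated zeta and zeta star values at the repeated argument $1$ as symmetric functions of the reciprocals $x_j:=1/j$, $1\le j\le n-1$. Reading off the definitions (equivalently, extracting coefficients of $t^\ell$ from $\prod_{j=1}^{n-1}(1+tx_j)$ and $\prod_{j=1}^{n-1}(1-tx_j)^{-1}$), one has
\[
\zt_{n-1}(\{1\}_\ell) = \sum_{n-1\ge m_1>\cdots>m_\ell\ge1}\frac1{m_1\cdots m_\ell} = e_\ell(x_1,\dots,x_{n-1}),
\]
and similarly $\zts_{n-1}(\{1\}_\ell) = \sum_{n-1\ge m_1\ge\cdots\ge m_\ell\ge1}\frac1{m_1\cdots m_\ell} = h_\ell(x_1,\dots,x_{n-1})$, the elementary and complete homogeneous symmetric functions of $x_1,\dots,x_{n-1}$ (the passage from the decreasing to the increasing indexing being harmless by commutativity). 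Since the $i$th power sum of these variables is $p_i(x_1,\dots,x_{n-1})=\sum_{j=1}^{n-1}j^{-i}=\Ha{n-1}{i}$, the relations recalled in the introduction, namely $P_k(p_1,\dots,p_k)=e_k$ and $Q_k(p_1,\dots,p_k)=h_k$ (with the $Q_k$ expressing the complete symmetric functions in terms of power sums, as noted there), applied in these $n-1$ variables give
\[
P_{\ell_2}(H_{n-1},\dots,\Ha{n-1}{\ell_2}) = e_{\ell_2}(x_1,\dots,x_{n-1}) = \zt_{n-1}(\{1\}_{\ell_2}),
\]
and, in the same way, $Q_{\ell_1}(H_{n-1},\dots,\Ha{n-1}{\ell_1}) = h_{\ell_1}(x_1,\dots,x_{n-1}) = \zts_{n-1}(\{1\}_{\ell_1})$.

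The remaining point is to make sure the term-by-term identity holds for every $n\ge1$, including the degenerate ranges: the symmetric-function identities $P_k(p_\bullet)=e_k$ and $Q_k(p_\bullet)=h_k$ are universal, hence valid in any number of variables, so they remain correct when $\ell_i>n-1$ (both sides then vanish) and when $n=1$ (no variables), consistent with the conventions $\zt_0(\{1\}_0)=\zts_0(\{1\}_0)=1$, $P_0=Q_0=1$, and $e_\ell=h_\ell=0$ for $\ell\ge1$. I do not expect any genuine obstacle here: the proof is essentially this symmetric-function dictionary, and the only points needing a little care are matching the ordering conventions in the definitions of $\zt_n$, $\zts_n$ and handling these boundary values.
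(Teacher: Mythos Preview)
Your proposal is correct and follows essentially the same route as the paper, which simply invokes Lemmata~\ref{stirmzv} and~\ref{ztsTrunc} to obtain the term-wise identities $\zt_{n-1}(\{1\}_k)=P_k(H_{n-1},\dots,\Ha{n-1}{k})$ and $\zts_{n-1}(\{1\}_k)=Q_k(H_{n-1},\dots,\Ha{n-1}{k})$; your symmetric-function argument is precisely how those lemmata are established. (One small correction to your boundary discussion: in the star case $h_\ell$ and $\zts_{n-1}(\{1\}_\ell)$ do \emph{not} vanish for $\ell>n-1$, though the identity between them of course still holds by the same universal argument.)
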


Next, we evaluate and relate $S$ to other sums.

\subsection{Stirling series}
 For $\ell_1=0$ we obtain Stirling series, already evaluated by Prodinger and the author in~\cite{PK2010}. We collect their result and extend it by giving a truncated counterpart, generalizing a result of Spie\ss{}~\cite{S}.

	\begin{thm}
	\label{thmStirlingFinite}
	The truncated series 
	\begin{equation*}
	  S_N = S_{N}(0,\ell_{2},r_{1},r_{2}) = \sum_{n=1}^{N}\frac{\zt_{n-1}(\{1\}_{\ell_2})}{\binom{n+r_1}{r_1}n^{r_2}}
	\end{equation*}
	is for $r_{1}, r_{2} \ge 1$ and $N \ge \ell_{2} \ge 0$ given by the following expression:
	\begin{equation*}
    S_N = \sum_{m=2}^{r_2}(-1)^{r_2-m}\zt_{N}(m,\{1\}_{\ell_{2}})\zts_{r_1}(\{1\}_{r_2-m}) + K(r_{1},r_{2},\ell_{2}) 
		+ E_{N}(r_1,r_2,\ell_{2}),
	\end{equation*}
	with
	\begin{equation*}
	  K(r_{1},r_{2},\ell_{2}) = \begin{cases}
		  (-1)^{r_{2}+1} \zts_{r_{1}}(\{1\}_{r_{2}-1},\ell_{2}+2), & \quad r_{2} \ge 2, \\
			\frac{1}{r_{1}^{\ell_{2}+1}}, & \quad r_{2}=1,
		\end{cases}
	\end{equation*}
	and
	\begin{equation*}
	  E_{N}(r_1,r_2,\ell_{2}) = (-1)^{r_2+1}\sum_{k_{0}=1}^{r_1}\binom{r_1}{k_{0}}\frac{(-1)^{k_{0}+1}}{k_{0}^{r_2-1}}R_N(\ell_{2},k_{0}),
	\end{equation*}
	where $R_{N}(\ell_{2},k_{0})$ is given by the nested sum
	\begin{equation*}
	  R_{N}(\ell_{2},k_{0}) = \sum_{j=0}^{\ell_{2}} \left[\sum_{k_{1}=1}^{k_{0}} \frac{1}{k_{1}} \sum_{k_{2}=1}^{k_{1}} \frac{1}{k_{2}} \cdots \sum_{k_{j}=1}^{k_{j-1}} \frac{1}{k_{j}} \cdot \zt_{N-1-j}(\{1\}_{\ell_{2}-j}) (H_{N-j}-H_{N-j+k_{j}})\right].
	\end{equation*}
	Moreover, $\lim_{N\to\infty} E_{N}(r_1,r_2,\ell_{2}) = 0$, such that $\lim_{N\to\infty} S_N = S$.
	\end{thm}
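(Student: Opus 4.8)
The plan is to prove the closed form for $S_N(0,\ell_2,r_1,r_2)$ by inducting on $r_1$, using a partial-fraction / Abel-summation step to handle the binomial coefficient $\binom{n+r_1}{r_1}$, and then to verify the vanishing of the error term $E_N$ as $N\to\infty$ separately by crude size estimates. First I would record the partial-fraction identity
\[
\frac{1}{\binom{n+r_1}{r_1}n^{r_2}} = \frac{1}{\binom{n+r_1-1}{r_1-1}n^{r_2}} - \frac{r_1}{\binom{n+r_1}{r_1}(n+r_1)\,n^{r_2}}\cdot\frac{1}{r_1},
\]
more usefully written as $\frac{1}{\binom{n+r_1}{r_1}} = \frac{n}{n+r_1}\cdot\frac{1}{\binom{n+r_1-1}{r_1-1}}$, together with the telescoping identity $\frac{1}{n+r_1} = \sum_{k_0} (\dots)$ coming from the standard expansion $\binom{n+r_1}{r_1}^{-1} = r_1 \sum_{k_0=1}^{r_1}\binom{r_1}{k_0}\frac{(-1)^{k_0+1}}{n+k_0}$ (equivalently a Beta-integral $\int_0^1 t^{n-1}(1-t)^{r_1}\,dt$). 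This is exactly what produces the alternating binomial sum $\sum_{k_0=1}^{r_1}\binom{r_1}{k_0}\frac{(-1)^{k_0+1}}{k_0^{r_2-1}}$ appearing in $E_N$; the inner factor $\frac{1}{n+k_0}$ will be split off and, after swapping summation order, rewritten via $\frac1{n+k_0}=\frac1n - \frac{k_0}{n(n+k_0)}$ or directly absorbed into the harmonic-number difference $H_{N-j}-H_{N-j+k_j}$.

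Next I would handle the $\zt_{n-1}(\{1\}_{\ell_2})$ factor. The key recursion is $\zt_{n-1}(\{1\}_{\ell_2}) = \sum_{m=1}^{n-1}\frac{1}{m}\zt_{m-1}(\{1\}_{\ell_2-1})$, which lets one peel off one layer of the nested $\{1\}$-string at a time; iterating it $j$ times yields precisely the nested sum $\sum_{k_1=1}^{\,\cdot}\frac1{k_1}\cdots\sum_{k_j=1}^{\,\cdot}\frac1{k_j}\,\zt_{\,\cdot}(\{1\}_{\ell_2-j})$ that is the body of $R_N(\ell_2,k_0)$. Combining this with the telescoping in $r_1$, I would reorganize the double sum $\sum_{1\le n\le N}$ by exchanging the order of summation so that the truncated MZV $\zt_N(m,\{1\}_{\ell_2})$ and the star-value $\zts_{r_1}(\{1\}_{r_2-m})$ emerge: the $\zt_N(m,\{1\}_{\ell_2})$ comes from collecting, for each fixed $m$, the coefficient $\frac{1}{n^m}$ over $n\le N$ with the trailing $\{1\}_{\ell_2}$ attached, while $\zts_{r_1}(\{1\}_{r_2-m})$ is the contribution of the repeated application of the $\frac{1}{n+r_1}$-recursion on the binomial side, each application lowering $r_2$ by one and feeding an extra "$\ge$"-type index into a star value of length $r_1$. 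The constant $K(r_1,r_2,\ell_2)$ is the "boundary" term of this telescoping: it is what survives when the recursion bottoms out, either at $r_2=1$ (giving the single term $1/r_1^{\ell_2+1}$, directly from $\int_0^1 t^{\ell_2}$-type bookkeeping) or at the point where the last $\frac1{n^m}$ with $m=\ell_2+2$ gets swallowed into a star value, producing $(-1)^{r_2+1}\zts_{r_1}(\{1\}_{r_2-1},\ell_2+2)$.

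For the final assertion $\lim_{N\to\infty}E_N(r_1,r_2,\ell_2)=0$ — which is the only thing one strictly needs in order to conclude $\lim_{N\to\infty}S_N = S$, the value of $S$ itself being already known from \cite{PK2010} once convergence is granted — I would argue by direct estimation of $R_N(\ell_2,k_0)$. Each summand of $R_N$ contains the factor $H_{N-j}-H_{N-j+k_j}$, which for fixed $k_j\le k_0\le r_1$ is $O(1/N)$ as $N\to\infty$; the remaining nested sum $\sum_{k_1}\frac1{k_1}\cdots\sum_{k_j}\frac1{k_j}\,\zt_{N-1-j}(\{1\}_{\ell_2-j})$ grows only polylogarithmically in $N$ (it is $O((\log N)^{\ell_2})$, since $\zt_M(\{1\}_p)\le \frac{(\log M + 1)^p}{p!}$ and each extra harmonic layer contributes another $\log$). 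Hence each of the finitely many ($j\le\ell_2$, $k_0\le r_1$) terms is $O((\log N)^{\ell_2}/N)\to 0$, and $E_N\to 0$. The main obstacle I anticipate is purely bookkeeping: correctly tracking the signs, the index shifts ($n$ vs.\ $n-1$, $\ell_2-j$, $r_2-m$), and the precise point at which the $r_1$-telescoping interacts with the $\ell_2$-peeling so as to produce $K$ rather than an extra $E_N$-type remainder — in other words, making the bottom-of-recursion case analysis ($r_2=1$ vs.\ $r_2\ge2$) match the stated formula exactly. Verifying the result against small cases ($\ell_2=0$, or $r_2=1$, or $r_1=1$) and against the known untruncated identity of \cite{PK2010} and the special case of Spie\ss{} \cite{S} would serve as the consistency check.
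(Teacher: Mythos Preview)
Your error-term argument is essentially the paper's: $H_{N-j}-H_{N-j+k_j}=O(1/N)$ combined with $\zt_{M}(\{1\}_{p})=O((\log M)^{p})$ kills each of the finitely many summands of $R_N$. That part is fine.

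The gap is in your derivation of the main term and of $K$. The paper does not induct on $r_1$, nor does it obtain $\zts_{r_1}(\{1\}_{r_2-m})$ by ``repeated application of the $\frac{1}{n+r_1}$-recursion, each application lowering $r_2$ by one''. Nothing in the argument lowers $r_2$ recursively. What actually happens is a single two-step decomposition: first the finite-difference identity (Lemma~\ref{LemmaKnuth}) gives
\[
\frac{1}{\binom{n+r_1}{r_1}}=r_1\sum_{k=1}^{r_1}\binom{r_1-1}{k-1}\frac{(-1)^{k+1}}{n+k},
\]
and then the partial-fraction expansion
\[
\frac{1}{(n+k)n^{r_2}}=\sum_{m=2}^{r_2}\frac{(-1)^{r_2-m}}{n^{m}k^{r_2+1-m}}+\frac{(-1)^{r_2+1}}{k^{r_2}}\Big(\frac{1}{n}-\frac{1}{n+k}\Big)
\]
splits $S_N$ into $S_{N;1}+S_{N;2}$ all at once. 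In $S_{N;1}$ the inner sum over $n$ is already $\zt_N(m,\{1\}_{\ell_2})$, and the remaining alternating binomial sum $\sum_{k=1}^{r_1}\binom{r_1}{k}\frac{(-1)^{k+1}}{k^{r_2-m}}$ is recognised as $\zts_{r_1}(\{1\}_{r_2-m})$ via Dilcher's identity (Lemma~\ref{ztsTrunc}), not via any recursion.

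For $S_{N;2}$ your peeling recursion on $\ell_2$ is the right move and matches the paper's computation of $T_N(\ell,k)=\sum_{n\le N}\zt_{n-1}(\{1\}_\ell)\big(\tfrac1n-\tfrac1{n+k}\big)$, which unwinds to $T_N(\ell,k)=R_N(\ell,k)+\zts_k(\{1\}_{\ell+1})$. But then you still have to evaluate
\[
(-1)^{r_2+1}\sum_{k=1}^{r_1}\binom{r_1}{k}\frac{(-1)^{k+1}}{k^{r_2-1}}\zts_k(\{1\}_{\ell_2+1}),
\]
and this is where $K$ comes from. It is not ``$\int_0^1 t^{\ell_2}$-type bookkeeping'': it requires the nested-star identity (Lemma~\ref{ztsNested}),
\[
\sum_{k=1}^{r_1}\binom{r_1}{k}\frac{(-1)^{k-1}}{k^{r_2-1}}\zts_k(\{1\}_{\ell_2+1})
=\begin{cases}\zts_{r_1}(\{1\}_{r_2-2},\ell_2+2),& r_2\ge 2,\\ 1/r_1^{\ell_2+1},& r_2=1,\end{cases}
\]
which is a separate lemma (a mild generalisation of Dilcher) and is precisely what produces the case split in the definition of $K$. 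Without naming this step your sketch cannot close, and the ``bottom-of-recursion'' language you use does not substitute for it.
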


By taking the limit $N\to\infty$ of the truncated series we obtain the following corollary.

\begin{coroll}
\label{thmStirling}
The series $S=S(0,\ell_2,r_1,r_2)$ is given by a Stirling series,
	\[
	S = \sum_{n\ge 1}\frac{\zt_{n-1}(\{1\}_{\ell_2})}{\binom{n+r_1}{r_1}n^{r_2}}
	= \sum_{n\ge 1}\frac{\stir{n}{\ell_2+1}}{n!\binom{n+r_1}{r_1}n^{r_2-1}},
	\]
	and it satisfies for $r_{1}, r_{2} \ge 1$:
	\begin{equation*}
	  S = \begin{cases}
		      \sum_{m=2}^{r_2}(-1)^{r_2-m}\zt(m,\{1\}_{\ell_2})\zts_{r_1}(\{1\}_{r_2-m})\\
			     \quad \mbox{} +	(-1)^{r_2+1}\zts_{r_1}(\{1\}_{r_2-2},\ell_{2}+2), & \quad \text{for $r_{2} \ge 2$},\\
	    \frac{1}{r_{1}^{\ell_{2}+1}}, & \quad \text{for $r_{2}=1$}. 
	  \end{cases}
	\end{equation*}
\end{coroll}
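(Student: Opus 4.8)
The plan is to deduce the Corollary directly from Theorem~\ref{thmStirlingFinite} by letting $N\to\infty$, so that essentially no new computation is needed beyond checking that each term converges to the claimed limit. First I would record the elementary convergence facts: for admissible $(i_1,\dots,i_k)$ the truncated value $\zt_N(i_1,\dots,i_k)$ converges to $\zt(i_1,\dots,i_k)$ as $N\to\infty$ (here the leading index $m$ runs from $2$ to $r_2$, and $\zt(m,\{1\}_{\ell_2})$ is admissible since $m\ge 2$; likewise $\zts_{r_1}(\{1\}_{r_2-2},\ell_2+2)$ has finite $r_1$ and is just a fixed rational number not depending on $N$). The constant term $K(r_1,r_2,\ell_2)$ and the factors $\zts_{r_1}(\{1\}_{r_2-m})$ are independent of $N$, so they pass to the limit unchanged. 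Hence the only thing that must be controlled is the error term $E_N(r_1,r_2,\ell_2)$.

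Next I would prove $\lim_{N\to\infty}E_N(r_1,r_2,\ell_2)=0$, which is the statement already asserted at the end of Theorem~\ref{thmStirlingFinite}. Since $E_N$ is a finite linear combination (over $k_0$ from $1$ to $r_1$, and over $j$ from $0$ to $\ell_2$) of terms of the shape
\[
\left(\sum_{k_1=1}^{k_0}\frac1{k_1}\cdots\sum_{k_j=1}^{k_{j-1}}\frac1{k_j}\right)\cdot\zt_{N-1-j}(\{1\}_{\ell_2-j})\cdot\bigl(H_{N-j}-H_{N-j+k_j}\bigr),
\]
with the nested harmonic sum and $k_0,\dots,k_j$ all bounded by $r_1$, it suffices to bound a single such term. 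The inner iterated sums are bounded by a constant depending only on $r_1,\ell_2$. The factor $\zt_{N-1-j}(\{1\}_{\ell_2-j})$ grows only like $O\bigl((\log N)^{\ell_2-j}\bigr)$ (indeed $\zt_n(\{1\}_k)=[x^k]\prod_{i=1}^n(1+x/i)\le \tfrac1{k!}H_n^k$). The key point is that $H_{N-j}-H_{N-j+k_j}=-\sum_{i=N-j+1}^{N-j+k_j}\tfrac1i=O(1/N)$ since there are at most $r_1$ summands each of size $O(1/N)$. Therefore each term is $O\bigl((\log N)^{\ell_2}/N\bigr)\to 0$, and summing the finitely many terms gives $E_N\to 0$. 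This establishes the first displayed identity of the Corollary by combining with the $N\to\infty$ limit of Theorem~\ref{thmStirlingFinite}; the case split $r_2\ge 2$ versus $r_2=1$ is inherited verbatim from the definition of $K$.

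Finally I would address the second equality in the Corollary, namely $S=\sum_{n\ge1}\frac{\stir{n}{\ell_2+1}}{n!\binom{n+r_1}{r_1}n^{r_2-1}}$, which rewrites the numerator $\zt_{n-1}(\{1\}_{\ell_2})$ in terms of Stirling cycle numbers. This follows from the generating-function identity $\prod_{i=1}^{n-1}\bigl(1+\tfrac{x}{i}\bigr)=\frac{\fallfak{(x+n-1)}{n-1}}{(n-1)!}=\frac{1}{n!}\sum_{k\ge0}\stir{n}{k+1}x^{k}$ (using $\sum_{k}\stir{n}{k}x^k=\fallfak{(x+n-1)}{n}=x\,\fallfak{(x+n-1)}{n-1}$), whence extracting the coefficient of $x^{\ell_2}$ yields $\zt_{n-1}(\{1\}_{\ell_2})=\frac{1}{n!}\stir{n}{\ell_2+1}$; substituting this and cancelling one power of $n$ against $n^{r_2}$ gives the stated form. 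I expect the only genuinely delicate point to be the uniform $O(1/N)$ estimate for the telescoping harmonic differences together with the polylogarithmic growth bound on $\zt_{N-1-j}(\{1\}_{\ell_2-j})$; everything else is bookkeeping and standard symmetric-function manipulation. Alternatively, one could bypass the truncated Theorem entirely and cite~\cite{PK2010} for the Stirling-series evaluation, but deriving it as the $N\to\infty$ limit keeps the note self-contained.
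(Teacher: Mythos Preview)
Your approach is essentially identical to the paper's: take $N\to\infty$ in Theorem~\ref{thmStirlingFinite}, control $E_N$ via $H_{N-j}-H_{N-j+k_j}=O(1/N)$ and $\zt_{N}(\{1\}_{\ell})=O((\log N)^{\ell})$, and invoke Lemma~\ref{stirmzv} for the Stirling-number rewriting. The one slip is in your last paragraph: from $\fallfak{(x+n-1)}{n-1}=\sum_{k\ge0}\stir{n}{k+1}x^{k}$ you should get $\prod_{i=1}^{n-1}(1+x/i)=\frac{1}{(n-1)!}\sum_{k\ge0}\stir{n}{k+1}x^{k}$, hence $\zt_{n-1}(\{1\}_{\ell_2})=\frac{1}{(n-1)!}\stir{n}{\ell_2+1}$ (not $\frac{1}{n!}$); this is exactly what makes your ``cancelling one power of $n$'' remark work, so the intent is clearly right and only the displayed factorial needs correcting.
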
	
Moreover, a direct byproduct is the following observation.
\begin{coroll}
\label{CoroStirling}
The series $S=S(0,\ell_2,r_1,r_2)$ is for arbitrary $r_{1}, r_{2} \ge 1$ a rational polynomial in the functions $\zt(i)$.
\end{coroll}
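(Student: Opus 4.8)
The plan is to read the result off from Corollary~\ref{thmStirling} directly. When $r_{2}=1$ that corollary gives $S=1/r_{1}^{\ell_{2}+1}\in\Q$, which is trivially a rational polynomial in the $\zt(i)$, so only the case $r_{2}\ge 2$ requires an argument. There Corollary~\ref{thmStirling} expresses $S$ as the finite sum
\[
S=\sum_{m=2}^{r_{2}}(-1)^{r_{2}-m}\,\zt(m,\{1\}_{\ell_{2}})\,\zts_{r_{1}}(\{1\}_{r_{2}-m})
+(-1)^{r_{2}+1}\,\zts_{r_{1}}(\{1\}_{r_{2}-2},\ell_{2}+2),
\]
so the whole problem reduces to controlling the two kinds of constituents on the right.

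First I would dispose of the coefficients: each truncated star value $\zts_{r_{1}}(\{1\}_{a})$ and $\zts_{r_{1}}(\{1\}_{a},\ell_{2}+2)$ is, by definition, a finite sum of products of reciprocals of positive integers not exceeding $r_{1}$, hence a rational number. Thus $S$ is a $\Q$-linear combination of the multiple zeta values $\zt(m,\{1\}_{\ell_{2}})$ with $2\le m\le r_{2}$, plus a rational constant, and it remains to show each such value lies in $\Q[\zt(2),\zt(3),\dots]$. These are precisely the height-one multiple zeta values, and their reduction to polynomials in Riemann zeta values is classical; concretely, I would invoke the generating-function identity
\[
\sum_{m\ge 2}\sum_{n\ge 0}\zt(m,\{1\}_{n})\,x^{m-1}y^{n+1}
=1-\exp\!\left(\sum_{k\ge 2}\frac{\zt(k)}{k}\bigl(x^{k}+y^{k}-(x+y)^{k}\bigr)\right).
\]
Since $x^{k}+y^{k}-(x+y)^{k}=-\sum_{j=1}^{k-1}\binom{k}{j}x^{j}y^{k-j}$ has no terms of total degree less than $2$, expanding the exponential shows that the coefficient of each monomial $x^{m-1}y^{n+1}$ is a finite $\Q$-polynomial in $\zt(2),\zt(3),\dots$; comparing coefficients identifies that polynomial with $\zt(m,\{1\}_{n})$. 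Substituting these evaluations back into the displayed formula for $S$ yields the claim.

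Given Corollary~\ref{thmStirling}, there is essentially no obstacle beyond correctly invoking the classical evaluation of the values $\zt(m,\{1\}_{\ell_{2}})$ (due to Aomoto and to Drinfeld, in the form popularized by Zagier; see also \cite{H2017} and the references therein). If one preferred a self-contained treatment, the one substantive step would be to reprove that reduction — for instance by comparing the shuffle and the stuffle (harmonic) products on the associated iterated integrals, or via the regularization argument leading to the identity above — but this is standard and well documented, and for the present corollary it is enough to quote it.
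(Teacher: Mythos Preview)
Your argument is correct and essentially identical to the paper's: both start from Corollary~\ref{thmStirling}, observe that the truncated star factors are rational, and then invoke the generating-function identity~\eqref{SUMbor} to reduce each $\zt(m,\{1\}_{\ell_{2}})$ to a rational polynomial in the $\zt(i)$. The only differences are cosmetic --- you spell out the rationality of the truncated coefficients and attribute the identity to Aomoto/Drinfel'd/Zagier, whereas the paper cites~\cite{BBB} and also records the explicit Kaneko--Sakata formula.
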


\subsection{Zeta star series}

	\begin{thm}
	\label{thmZetastar}
	The series $S = S(\ell_{1}, 0, r_{1}, r_{2}) = \sum_{n\ge 1}\frac{\zts_{n-1}(\{1\}_{\ell_1})}{\binom{n+r_1}{r_1}n^{r_2}}$ can be evaluated as follows:
	\begin{align*}
	S&=	\sum_{m=2}^{r_2}(-1)^{r_2-m}\big(\zts(m,\{1\}_{\ell_1})-\zts(m+1,\{1\}_{\ell_1-1})\big)\zts_{r_1}(\{1\}_{r_2-m})\\
	&\quad+(-1)^{r_2+1}\sum_{k=1}^{r_1}\binom{r_1}{k}\frac{(-1)^{k+1}}{k^{r_2-1}}\Big[T^{\ast}(\ell_1,k)+\frac1kT^{\ast}(\ell_1-1,k)
		\Big]\\
		&\quad+(-1)^{r_2}\zts(2,\{1\}_{\ell_1-1})\zts_{r_1}(\{1\}_{r_2-1}),
	\end{align*}
	with
	\begin{equation*}
	  T^{\ast}(\ell,k) = 
		\begin{cases}
		  \sum_{j=0}^{\ell-1}\zts(2,\{1\}_{\ell-1-j})\zt_{k-1}(\{1\}_j)\\
			\quad \mbox{} +\zt_{k-1}(\{1\}_{\ell+1})+\zt_{k-1}(\{1\}_{\ell-1},2), & \quad \text{for $\ell \ge 1$},\\
			H_{k}, & \quad \text{for $\ell=0$}.
		\end{cases}
	\end{equation*}
\end{thm}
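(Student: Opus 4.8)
\textbf{Proof plan for Theorem \ref{thmZetastar}.}

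The plan is to follow the same strategy that underlies Theorem \ref{thmStirlingFinite} and its corollary, namely to isolate the dependence of $S(\ell_1,0,r_1,r_2)$ on the binomial weight $\binom{n+r_1}{r_1}^{-1}$ via a partial-fraction / Abel-summation argument, and then to convert the resulting finite sums over $n$ into the stated (truncated and non-truncated) multiple zeta star values. First I would record the algebraic identity coming from the partial fraction expansion of $\frac{1}{\binom{n+r_1}{r_1}n^{r_2}}$ in $n$: writing $\frac{r_1!}{n(n+1)\cdots(n+r_1)n^{r_2}}$ as a combination of $\frac{1}{n^{j}}$ and $\frac{1}{(n+k)}$ terms produces exactly the coefficients $(-1)^{r_2-m}$, the tails $\zts_{r_1}(\{1\}_{r_2-m})$, and the alternating binomial factor $\binom{r_1}{k}\frac{(-1)^{k+1}}{k^{r_2-1}}$ that appear in the claimed formula. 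This step is essentially the one already carried out in \cite{PK2010} for the case $\ell_1=0$; the new feature here is that the numerator is $\zts_{n-1}(\{1\}_{\ell_1})$ rather than $\zt_{n-1}(\{1\}_{\ell_2})$.

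The key structural input that distinguishes the zeta-star case is the elementary recursion
\[
\zts_{n-1}(\{1\}_{\ell_1}) = \zts_{n-2}(\{1\}_{\ell_1}) + \frac{1}{n-1}\,\zts_{n-1}(\{1\}_{\ell_1-1}),
\]
valid for $\ell_1\ge 1$, together with the telescoping relation $\sum_{n\ge 1}\frac{1}{n^{r_2}}\bigl(a_n - a_{n-1}\bigr)$-type manipulations. Applying this recursion is what generates, on the one hand, the pair $\zts(m,\{1\}_{\ell_1}) - \zts(m+1,\{1\}_{\ell_1-1})$ in the main term (the second summand being the contribution of the $\frac{1}{n-1}$-factor, reindexed) and, on the other hand, the bracket $T^\ast(\ell_1,k) + \frac1k T^\ast(\ell_1-1,k)$ in the error-type term. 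So the plan is: (i) substitute the partial-fraction expansion; (ii) in each resulting sum over $n$ of the form $\sum_n \frac{\zts_{n-1}(\{1\}_{\ell_1})}{n^{j}}$ or $\sum_n \frac{\zts_{n-1}(\{1\}_{\ell_1})}{n+k}$, use the star-recursion to split off the depth-$(\ell_1-1)$ piece; (iii) recognise the convergent pieces as $\zts(m,\{1\}_{\ell_1})$ and $\zts(m+1,\{1\}_{\ell_1-1})$ by the stuffle/iterated-sum identity $\sum_{n\ge 1}\frac{\zts_{n-1}(\{1\}_{\ell})}{n^{m}} = \zts(m,\{1\}_{\ell})$ for $m\ge 2$; and (iv) isolate the $n=1$ (respectively small-$n$) boundary contributions, which after the partial-fraction shift $n\mapsto n+k$ become the finite sums $\zt_{k-1}(\{1\}_j)$ appearing inside $T^\ast$. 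The precise shape of $T^\ast(\ell,k)$ — the sum $\sum_{j=0}^{\ell-1}\zts(2,\{1\}_{\ell-1-j})\zt_{k-1}(\{1\}_j)$ plus the two extra finite terms $\zt_{k-1}(\{1\}_{\ell+1})$ and $\zt_{k-1}(\{1\}_{\ell-1},2)$ — should drop out by iterating step (ii) down from depth $\ell_1$ to depth $0$ and collecting, at each level, the boundary term produced by the shift $n\mapsto n+k$ in the $\frac{1}{n+k}$ summand; the base case $\ell=0$ gives $T^\ast(0,k)=\sum_{n\ge1}\frac{1}{n(n+k)}\cdot(\text{boundary})=H_k$, which pins down the normalisation.

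The separate summand $(-1)^{r_2}\zts(2,\{1\}_{\ell_1-1})\zts_{r_1}(\{1\}_{r_2-1})$ I expect to arise as the "leftover" from the $m=1$ slot of the partial-fraction expansion once the star-recursion has been applied: the term $\frac{1}{n}\cdot\frac{1}{n-1}\zts_{n-1}(\{1\}_{\ell_1-1})$ summed against the $j=1$ partial-fraction coefficient contributes a genuinely convergent piece (unlike in the $\ell_1=0$ case, where $m=1$ is purely a divergent tail that cancels), and reindexing $n-1\mapsto n$ turns $\sum_n \frac{\zts_{n-1}(\{1\}_{\ell_1-1})}{n^2}$ into $\zts(2,\{1\}_{\ell_1-1})$, with the $\zts_{r_1}(\{1\}_{r_2-1})$ factor inherited from the tail structure. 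The main obstacle will be bookkeeping: keeping track of the four interacting index shifts (the partial-fraction level $m$, the binomial shift $k$, the star-recursion depth from $\ell_1$ down to $0$, and the inner finite-sum index $j$ in $T^\ast$) simultaneously, and verifying that the divergent tails organise into $\zts_{r_1}(\{1\}_{\cdot})$ exactly as in Corollary \ref{thmStirling}, so that nothing is double-counted and the $\ell_1=0$ specialisation recovers Corollary \ref{thmStirling}. A clean way to manage this is to prove the formula by induction on $\ell_1$, using Corollary \ref{thmStirling} (equivalently Theorem \ref{thmStirlingFinite} at $N\to\infty$) as the base case $\ell_1=0$ and the star-recursion above as the inductive step, which localises the verification to checking that one application of the recursion transforms the depth-$(\ell_1-1)$ formula into the depth-$\ell_1$ formula — a finite, if intricate, identity among multiple zeta star values that can be checked term by term.
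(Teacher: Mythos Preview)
Your plan is essentially the paper's: apply Lemma~\ref{LemmaKnuth} and the partial fraction decomposition~\eqref{parfrac} to split $S$ into a ``main'' piece $S_1^\ast$ and a ``tail'' piece $S_2^\ast$, then evaluate each. One concrete error: the identity you quote in step~(iii),
\[
\sum_{n\ge 1}\frac{\zts_{n-1}(\{1\}_{\ell})}{n^{m}} = \zts(m,\{1\}_{\ell}),
\]
is false. The correct statement is $\sum_{n\ge 1}\frac{\zts_{n}(\{1\}_{\ell})}{n^{m}} = \zts(m,\{1\}_{\ell})$ (note the index $n$, not $n-1$), and hence, via the recurrence $\zts_{n-1}(\{1\}_\ell)=\zts_n(\{1\}_\ell)-\frac1n\zts_n(\{1\}_{\ell-1})$ from Lemma~\ref{ztsTrunc},
\[
\sum_{n\ge 1}\frac{\zts_{n-1}(\{1\}_{\ell})}{n^{m}} = \zts(m,\{1\}_{\ell})-\zts(m+1,\{1\}_{\ell-1}).
\]
So the pair $\zts(m,\{1\}_{\ell_1})-\zts(m+1,\{1\}_{\ell_1-1})$ appears \emph{directly} from this single identity, not from two separate applications of your backward recursion. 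This also explains the extra term $(-1)^{r_2}\zts(2,\{1\}_{\ell_1-1})\zts_{r_1}(\{1\}_{r_2-1})$ more cleanly than your ``$m=1$ slot'' heuristic: it comes from the relation $U^\ast(\ell,k)=T^\ast(\ell,k)+\frac1kT^\ast(\ell-1,k)-\zts(2,\{1\}_{\ell-1})$ between the two versions of the tail sum (with $\zts_{n-1}$ versus $\zts_n$ in the numerator).

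For $T^\ast(\ell,k)$ itself, the paper does not induct on $\ell_1$ over the full statement as you propose; it instead derives a self-contained recurrence $T^\ast(\ell,k)=\sum_{i=1}^{k-1}\frac1i\,T^\ast(\ell-1,i)+\zts(2,\{1\}_{\ell-1})$ by interchanging the order of summation in $\sum_{n\ge 1}\zts_n(\{1\}_\ell)\bigl(\frac1n-\frac1{n+k}\bigr)$, and then unwinds it. Your inductive scheme would work too, but the paper's route avoids having to verify the full theorem at each depth and makes the appearance of $\zt_{k-1}(\{1\}_j)$, $\zt_{k-1}(\{1\}_{\ell+1})$, and $\zt_{k-1}(\{1\}_{\ell-1},2)$ transparent.
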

\begin{remark}\label{rem:zeta_star}
The alternating binomial sums involving $\zt_{k-1}(a_1,\dots,a_j)$ occurring in the representation of $S$ given in Theorem~\ref{thmZetastar} can be converted to truncated multiple zeta star values. We comment on this in Section~\ref{sec:zeta_star}. % using~\eqref{ztZetaStar} and Hoffman's duality (see Lemma~\ref{lemBr}). 
\end{remark}
\begin{remark}
We also have a corresponding result for the truncated series $S_{N} = \sum_{n=1}^{N}\frac{\zts_{n-1}(\{1\}_{\ell_1})}{\binom{n+r_1}{r_1}n^{r_2}}$, 
which is more involved compared to the truncated Stirling series in Theorem~\ref{thmStirlingFinite}.
\end{remark}

\begin{coroll}
\label{corollZTS}
The series 
\[
S = S(\ell_1,0,r_1,1)=\sum_{n\ge 1}\frac{\zts_{n-1}(\{1\}_{\ell_{1}}}{\binom{n+r_1}{r_1}n}
\]
is for $r_{1} \ge 1$ a rational polynomial in the $\zt(i)$. 

\smallskip

Similarly, the series $S = S(\ell_{1},0,r_{1},2)$ is for $\ell_{1} \le 2$ a rational polynomial in the $\zt(i)$. 
\end{coroll}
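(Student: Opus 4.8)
The plan is to specialize the closed form for $S=S(\ell_1,0,r_1,r_2)$ from Theorem~\ref{thmZetastar} to $r_2=1$ and $r_2=2$ and then read off the terms. Two elementary observations do all the work. First, every \emph{truncated} value $\zt_N(\cdot)$ or $\zts_N(\cdot)$ with a finite cutoff $N$ is a finite sum of reciprocals of products of integers, hence a rational number; in particular $\zts_{r_1}(\{1\}_{r_2-m})$ and $\zts_{r_1}(\{1\}_{r_2-1})$ (which for $r_2=2$ is $H_{r_1}$), as well as all of $\zt_{k-1}(\{1\}_j)$, $\zt_{k-1}(\{1\}_{\ell+1})$ and $\zt_{k-1}(\{1\}_{\ell-1},2)$ occurring inside $T^{\ast}(\ell,k)$, are rational. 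Second, one has the classical reduction $\zts(2,\{1\}_p)=(p+1)\,\zt(p+2)$, valid for all $p\ge 0$, so every value of the shape $\zts(2,\{1\}_{\bullet})$ appearing in the formula --- namely $\zts(2,\{1\}_{\ell_1})$, $\zts(2,\{1\}_{\ell_1-1})$ and the $\zts(2,\{1\}_{\ell-1-j})$ inside $T^{\ast}(\ell,k)$ --- is a rational multiple of a single zeta value. Combining the two, $T^{\ast}(\ell,k)$ is, for every $\ell\ge 0$ and $k\ge 1$, a finite $\Q$-linear combination of products of rationals with single zeta values (for $\ell=0$ it is simply $H_k$), hence a rational polynomial in the $\zt(i)$.

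Substituting $r_2=1$ into Theorem~\ref{thmZetastar}, the sum $\sum_{m=2}^{r_2}(\cdots)$ is empty, and what remains is a finite $\Q$-combination of $T^{\ast}(\ell_1,k)$ and $T^{\ast}(\ell_1-1,k)$ for $1\le k\le r_1$ together with $\zts(2,\{1\}_{\ell_1-1})$; by the preceding paragraph this is a rational polynomial in the $\zt(i)$ for every $r_1\ge 1$ and $\ell_1\ge 1$, while $\ell_1=0$ is the Stirling series already covered by Corollary~\ref{thmStirling}. This yields the first assertion. For $r_2=2$ the same substitution leaves, besides the $T^{\ast}$ contributions and the now-already-handled rational multiples of $\zts(2,\{1\}_{\ell_1})$ and $H_{r_1}\,\zts(2,\{1\}_{\ell_1-1})$, exactly one additional term, $-\zts(3,\{1\}_{\ell_1-1})$. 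For $\ell_1=1$ this is $-\zt(3)$, and for $\ell_1=2$ it equals $-\bigl(\zt(3,1)+\zt(4)\bigr)=-\tfrac54\,\zt(4)$ by Euler's evaluation $\zt(3,1)=\tfrac14\,\zt(4)$ --- in either case a rational multiple of a single zeta value, and $\ell_1=0$ is once more the Stirling case. Hence $S(\ell_1,0,r_1,2)$ is a rational polynomial in the $\zt(i)$ whenever $\ell_1\le 2$.

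The single non-formal ingredient is the identity $\zts(2,\{1\}_p)=(p+1)\zt(p+2)$, which is classical; it can be quoted from the literature or derived quickly, e.g.\ from the duality theorem combined with the sum theorem, or by expanding the star value as a sum of multiple zeta values over all mergings of $(2,\{1\}_p)$ and using induction on $p$. I do not expect a genuine obstacle beyond keeping track of which terms of Theorem~\ref{thmZetastar} survive each specialization; the noteworthy feature is \emph{why} the bound $\ell_1\le 2$ enters for $r_2=2$ but is absent for $r_2=1$. It is precisely the term $\zts(3,\{1\}_{\ell_1-1})$: it is transparently a rational multiple of a single zeta value only for $\ell_1-1\le 1$, whereas for larger $\ell_1$ (already $\ell_1=6$ produces weight $8$) its reducibility to a rational polynomial in the $\zt(i)$ cannot be asserted without deeper structural results on multiple zeta values; for the same reason a term $\zts(r_2,\{1\}_{\ell_1})$ with $r_2\ge 3$ blocks any analogous statement for $r_2\ge 3$.
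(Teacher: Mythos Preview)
Your proposal is correct and follows essentially the same approach as the paper: both specialize Theorem~\ref{thmZetastar}, invoke Granville's identity $\zts(2,\{1\}_{p})=(p+1)\zt(p+2)$ to dispose of all the $\zts(2,\{1\}_{\bullet})$ terms, and then isolate $\zts(3,\{1\}_{\ell_1-1})$ as the sole obstruction when $r_2=2$. The only cosmetic difference is in the evaluation of $\zts(3,1)$: you go via $\zts(3,1)=\zt(3,1)+\zt(4)$ together with Euler's $\zt(3,1)=\tfrac14\zt(4)$, while the paper quotes an identity expressing $\zts(3,1)$ through $\zts(2,1,1)$ and $\zt(4)$ --- either way one lands on $\zts(3,1)=\tfrac54\zt(4)$.
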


\subsection{Relation to the Arakawa-Kaneko zeta function}

\begin{thm}
\label{thmArakawa}
For $r_1=0$ the series $S$ can be expressed in terms of the Arakawa-Kaneko zeta function:
	\[
	S = S(\ell_{1},\ell_{2},0,r_{2}) = \sum_{n\ge 1}\frac{\zts_{n-1}(\{1\}_{\ell_1})\zt_{n-1}(\{1\}_{\ell_2})}{n^{r_2}}
	=\xi_{r_2-1,\{1\}_{\ell_2}}(\ell_1+1) - \xi_{r_2,\{1\}_{\ell_2}}(\ell_1).
	\]
	
On the other hand, special values of the Arakawa-Kaneko zeta function can be written
in terms of the polynomials $P_k$ and $Q_k$. Namely, for $s\in\N$, it holds
\[
\xi_{i_{1},\{1\}_{r-1}}(s)=
%=\sum_{n_1\ge 1}\frac{\zeta_{n_1}^{\ast}(\{1\}_{s-1})\zeta_{n_1-1}(k_2,\dots,k_r)}{n_1^{k_1+1}}.
\sum_{n \ge 1}\frac{Q_{s-1}(H_{n},\Ha{n}{2},\dots,\Ha{n}{s-1})P_{r-1}(H_{n-1},\Ha{n-1}{2},\dots,\Ha{n-1}{r-1})}{n^{i_{1}+1}}.
\]
\end{thm}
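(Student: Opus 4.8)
The plan is to establish the two identities separately, both by expanding the defining integral of the Arakawa--Kaneko zeta function and matching it term-by-term with the series representation of $S$. For the first identity, I would start from
$\xi_{i_1,\dots,i_k}(s)=\frac{1}{\Gamma(s)}\int_0^\infty \frac{t^{s-1}}{e^t-1}\Li_{i_1,\dots,i_k}(1-e^{-t})\,dt$
and substitute the series definition of $\Li_{r_2-1,\{1\}_{\ell_2}}(1-e^{-t})$. Writing $1-e^{-t}$ and using the classical expansion $\frac{(1-e^{-t})^{n}}{n}=\sum_{m\ge n}(-1)^{m-n}\frac{\stir{m}{n}}{m!}\cdot\text{(something)}$ — more precisely, one should pull out the outermost summation variable $n_1$ and use $\frac{(1-e^{-t})^{n_1}}{\text{rest}}$, then integrate against $\frac{t^{s-1}}{e^t-1}$. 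The key computational lemma is the standard evaluation $\int_0^\infty \frac{t^{s-1}}{e^t-1}(1-e^{-t})^{n}\,dt$ in terms of $\Gamma(s)$ and a finite sum, which turns the polylog's nested sum over $n_1>n_2>\dots$ into a single sum over $n$ with $\zt_{n-1}(\{1\}_{\ell_2})$ appearing from the inner indices $n_2>\dots>n_k$. The two terms $\xi_{r_2-1,\{1\}_{\ell_2}}(\ell_1+1)$ and $\xi_{r_2,\{1\}_{\ell_2}}(\ell_1)$ should combine so that the difference of the two finite integrals collapses to exactly $\frac{\zt_{n-1}^\star(\{1\}_{\ell_1})}{n^{r_2}}$ via a telescoping/Abel-summation identity relating $\zt^\star_{n-1}(\{1\}_{\ell_1})$ to $\zt_{n-1}(\{1\}_{\ell_1})$ and $\zt_{n-1}(\{1\}_{\ell_1-1})$; this uses the relation between star and non-star iterated sums of $1$'s.

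For the second identity I would run essentially the same computation but in the opposite direction: expand $\xi_{i_1,\{1\}_{r-1}}(s)$ via its integral, expand the polylog, integrate out $t$, and recognize the resulting sum. The integral $\frac{1}{\Gamma(s)}\int_0^\infty \frac{t^{s-1}}{e^t-1}(1-e^{-t})^{N}\,dt$ should evaluate to something of the shape $\frac{1}{N}$ times a polynomial in the harmonic numbers $H_{N-1},\Ha{N-1}{2},\dots$ — indeed this is where $Q_{s-1}$ enters, since the complete homogeneous symmetric functions of $1,\frac12,\dots,\frac1{N}$ (equivalently the $Q$-polynomials evaluated at the $H^{(j)}_N$) are exactly the coefficients arising from expanding $\prod (1-x/k)^{-1}$-type generating functions; the $P_{r-1}$ factor comes from the inner $\{1\}_{r-1}$ indices of the polylog, since $\zt_{N-1}(\{1\}_{r-1})=e_{r-1}(1,\frac12,\dots,\frac1{N-1})=P_{r-1}(H_{N-1},\dots)$ by the $P_k(p_1,\dots,p_k)=e_k$ identity quoted in the introduction. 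Setting $i_1 = s-1$ appropriately so that the exponent becomes $n^{i_1+1}$ finishes the matching.

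The main obstacle I expect is the bookkeeping in the integral evaluation $\int_0^\infty \frac{t^{s-1}}{e^t-1}(1-e^{-t})^{N}\,dt$ and, in the first part, correctly organizing the telescoping between the two $\xi$-values so that the non-star sum becomes a star sum. Concretely, one must be careful that the shift $s\mapsto s\pm 1$ in the $\xi$ argument corresponds precisely to inserting/removing one factor of $\frac1n$ and simultaneously shifting $\{1\}_{\ell_1}\mapsto\{1\}_{\ell_1-1}$, and that $\zt^\star_{n-1}(\{1\}_{\ell_1}) = \zt_{n-1}(\{1\}_{\ell_1}) + \frac{1}{n}\,\zt^\star_{n-1}(\{1\}_{\ell_1-1})$-type recursions line up with the algebra. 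Once the integral lemma is in hand and the symmetric-function dictionary ($e_k \leftrightarrow P_k$, $h_k \leftrightarrow Q_k$) is invoked, both identities reduce to routine rearrangement of absolutely convergent multiple series, with convergence for $\Re(s)>0$ (hence for the relevant integer arguments, using $r_1+r_2\ge 2$ or $\ell_1+1\ge 1$ as needed) justifying the interchange of sum and integral.
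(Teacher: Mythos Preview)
Your approach is essentially the paper's: evaluate the integral $\int_0^\infty t^{s-1}e^{-t}(1-e^{-t})^{n-1}\,dt$ (this is what $\frac{1}{e^t-1}(1-e^{-t})^{n}$ simplifies to) to obtain the series representation $\xi_{i_1,\dots,i_k}(s)=\sum_{n\ge 1}\frac{\zts_n(\{1\}_{s-1})\,\zt_{n-1}(i_2,\dots,i_k)}{n^{i_1+1}}$ (this is Lemma~\ref{lemArakawa}), then combine with the $P_k,Q_k$ dictionary of Lemmata~\ref{stirmzv} and~\ref{ztsTrunc}. One correction to your bookkeeping: the integral already produces the \emph{star} value $\zts_n(\{1\}_{s-1})$ directly (via Lemma~\ref{ztsTrunc}, not a star/non-star conversion), so the telescoping step you need for the first identity is the pure-star recurrence $\zts_{n-1}(\{1\}_{\ell_1}) = \zts_n(\{1\}_{\ell_1}) - \tfrac{1}{n}\zts_n(\{1\}_{\ell_1-1})$ from Lemma~\ref{ztsTrunc}, not the mixed relation you wrote; with that in hand both identities are immediate.
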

\begin{remark}
The result above allows to translate identities for the Arakawa-Kaneko zeta function 
into evaluations of $S$. On the other hand, the presentation of $\xi_{i_{1},\{1\}_{r-1}}(s)$
allows to translate results of Hoffman~\cite{H2017} into evaluations of $\xi_{i_{1},\{1\}_{r-1}}(s)$.
Values of the ordinary Arakawa-Kaneko zeta function, case $r=1$, only involve $Q_{s-1}(H_{n},\Ha{n}{2},\dots,\Ha{n}{s-1})$, but not the polynomials $P_k$.
\end{remark}

The general case is more involved. We can give an evaluation into the Arakawa-Kaneko zeta function
and another sum. This is discussed in the final section along with open problems.

\medskip

\section{Truncated multiple zeta values}
Before we turn to a proof of the main theorems we collect in this section several results for the truncated zeta values $\zt_n(\{1\}_k)$ and $\zts_n(\{1\}_k)$. 
Following Hoffman~\cite{H2017}, we note two different explicit expressions for the polynomials $P_k$ and $Q_k$ due to MacDonald~\cite{MacDo}.
\begin{lem}
\label{lemHoffman}
The polynomials $P_k$ and $Q_k$ satisfy
\[
P_k(x_1,\dots,x_k)=\sum_{m_1+2m_2+\dots =k}\frac{(-1)^{m_2+m_4+\dots}}{m_1!m_2!\dots}
\Big(\frac{x_1}{1}\Big)^{m_1}\Big(\frac{x_2}{2}\Big)^{m_2}\dots,
\]
\[
Q_k(x_1,\dots,x_k)=\sum_{m_1+2m_2+\dots =k}\frac{1}{m_1!m_2!\dots}
\Big(\frac{x_1}{1}\Big)^{m_1}\Big(\frac{x_2}{2}\Big)^{m_2}\dots .\\
\]
\end{lem}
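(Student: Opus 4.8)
The plan is to read off both identities from the classical generating-function relations expressing the elementary symmetric function $e_k$ and the complete homogeneous symmetric function $h_k$ in terms of the power sums $p_i$, and then to invoke the defining properties $P_k(p_1,\dots,p_k)=e_k$ and $Q_k(p_1,\dots,p_k)=h_k$ together with the algebraic independence of the power sums. Concretely, I would fix a number of variables $n\ge k$, write $p_i=p_i(x_1,\dots,x_n)$, $e_k=e_k(x_1,\dots,x_n)$, and begin from the identity of formal power series in an indeterminate $t$,
\[
\sum_{k\ge 0}e_k t^k=\prod_{j=1}^{n}(1+x_jt)=\exp\Big(\sum_{j=1}^{n}\log(1+x_jt)\Big)=\exp\Big(\sum_{i\ge 1}\frac{(-1)^{i-1}}{i}\,p_i\,t^i\Big).
\]
Factoring the exponential over $i$, expanding each factor $\exp\big(\tfrac{(-1)^{i-1}}{i}p_it^i\big)=\sum_{m_i\ge 0}\tfrac{1}{m_i!}\big(\tfrac{(-1)^{i-1}p_i}{i}\big)^{m_i}t^{im_i}$, and extracting the coefficient of $t^k$ then gives
\[
e_k=\sum_{m_1+2m_2+\cdots=k}\ \prod_{i\ge 1}\frac{1}{m_i!}\Big(\frac{(-1)^{i-1}p_i}{i}\Big)^{m_i}.
\]
A brief sign check finishes this part: $\prod_{i\ge 1}(-1)^{(i-1)m_i}=(-1)^{\sum_i(i-1)m_i}$, and since $i-1$ is even for odd $i$ and odd for even $i$, the exponent is congruent to $m_2+m_4+\cdots$ modulo $2$, which produces exactly the sign $(-1)^{m_2+m_4+\cdots}$ appearing in the claimed formula for $P_k$.

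For $Q_k$ the computation is entirely analogous, now starting from the generating function $\sum_{k\ge 0}h_kt^k=\prod_{j=1}^{n}(1-x_jt)^{-1}=\exp\big(\sum_{i\ge 1}\tfrac{1}{i}p_it^i\big)$ of the complete homogeneous symmetric functions; the same factorization and coefficient extraction yield $h_k=\sum_{m_1+2m_2+\cdots=k}\prod_{i\ge 1}\tfrac{1}{m_i!}(p_i/i)^{m_i}$, this time with all signs positive, which matches the stated expression for $Q_k$ after renaming $p_i$ to $x_i$.

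The only step that is not a routine formal-series manipulation, and hence the point I would be most careful about, is precisely this renaming. Since $P_k$ and $Q_k$ were introduced as the polynomials in $k$ indeterminates characterized by $P_k(p_1,\dots,p_k)=e_k$ and $Q_k(p_1,\dots,p_k)=h_k$, and since for $n\ge k$ the power sums $p_1,\dots,p_k$ are algebraically independent in $\Q[x_1,\dots,x_n]$, such polynomials are unique; the two displays above exhibit $e_k$ and $h_k$ as explicit polynomials in $p_1,\dots,p_k$, so these polynomials must be $P_k$ and $Q_k$. Substituting the formal variables $x_i$ for $p_i$ is therefore legitimate, and the lemma follows. (Alternatively, one could prove both identities by induction using Newton's identities, but the generating-function route avoids the bookkeeping of that recursion.)
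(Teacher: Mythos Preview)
Your argument is correct: the generating-function identities for $\sum e_k t^k$ and $\sum h_k t^k$ in terms of power sums are standard, your sign bookkeeping is accurate, and the passage from the symmetric-function identities to identities of polynomials in formal indeterminates via algebraic independence of $p_1,\dots,p_k$ (for $n\ge k$) is exactly the right justification.

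As for comparison with the paper: the paper does not actually prove this lemma. It is stated with attribution to MacDonald~\cite{MacDo} (and Hoffman~\cite{H2017}) and then used as input in the proofs of Lemmata~\ref{stirmzv} and~\ref{ztsTrunc}. So your write-up supplies what the paper outsources to a reference; the generating-function derivation you give is essentially the standard one found in MacDonald, so there is no methodological divergence to discuss.
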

First, we discuss expressions for $\zt_{n}(\{1\}_{k})$. These expressions are, for example in terms of the Stirling numbers of the first kind, well-known (see for example Adamchik~\cite{A}, Prodinger~\cite{PK2010}, Hoffman~\cite{H2017}, Hoffman et al.~\cite{H2018+}), but perhaps not all parts of it and not in this notation.
\begin{lem}[Truncated multiple zeta values $\zt_n(\{1\}_k)$]
\label{stirmzv}
%For positive integers $n\ge k$, the truncated multiple zeta values can be expressed as Stirling numbers of the first kind,
%\[
%\frac{s(n,k)}{(n-1)!}=(-1)^{n-k}\zt_{n-1}(\{1\}_{k-1}),
%\]
%in terms of generalized Harmonic numbers, Bell polynomials 
%\begin{align*}
%\frac{s(n,k)}{(n-1)!}&=\frac{(-1)^{n-1}}{(k-1)!}(B_{k-1}(-0!\Ha{n-1}{1},-1!\Ha{n-1}{2},\dots,-(k-2)!\Ha{n-1}{k-1})\\
%&=(-1)^{n-k}\sum_{m_1+2m_2+\dots=k-1}\frac{(-1)^{m_2+m_4+\dots}}{m_1!m_2!\dots}\Big(\frac{\Ha{n-1}{1}}{1}\Big)^{m_1} \Big(\frac{\Ha{n-1}{2}}{2}\Big)^{m_2}\dots,
%\end{align*}
%as well as the polynomials $P_k$:
%\[
%\frac{s(n,k)}{(n-1)!}=P_k(\Ha{n-1}{1},\dots,\Ha{n-1}{k-1}).
%\]
%Here $\{1\}_n$ means 1 repeated $n$ times. 
For positive integers $n\ge k$, the truncated multiple zeta values can be expressed as (weighted) Stirling numbers of the first kind,
\[
\zt_{n}(\{1\}_{k})=(-1)^{n-k}\frac{s(n+1,k+1)}{n!}= \frac{\stir{n+1}{k+1}}{n!},
\]
in terms of Bell polynomials and generalized Harmonic numbers, 
\begin{align*}
\zt_{n}(\{1\}_{k})&=\frac{(-1)^{k}}{k!}B_{k}(-0!\Ha{n}{1},-1!\Ha{n}{2},\dots,-(k-1)!\Ha{n}{k})\\
&=\sum_{m_1+2m_2+\dots=k}\frac{(-1)^{m_2+m_4+\dots}}{m_1!m_2!\dots}\Big(\frac{\Ha{n}{1}}{1}\Big)^{m_1} \Big(\frac{\Ha{n}{2}}{2}\Big)^{m_2}\dots,
\end{align*}
as well as by the polynomials $P_k$ and via determinants:
\[
\zt_{n}(\{1\}_{k})=P_k(\Ha{n}{1},\dots,\Ha{n}{k})=\frac{1}{k!}
\left|
\begin{matrix}
\Ha{n}{1} & 1 & 0 & \dots &0\\
\Ha{n}2 & \Ha{n}{1} & 2 & \dots &0\\
\hdots & \hdots & \ddots & \ddots &\vdots\\
\Ha{n}{k-1} & \Ha{n}{k-2} & \Ha{n}{k-3} & \ddots &k-1\\
\Ha{n}{k} & \Ha{n}{k-1} & \Ha{n}{k-2} & \dots &\Ha{n}1\\
\end{matrix}
\right|.
\]
%Here $\{1\}_n$ means 1 repeated $n$ times. 
\end{lem}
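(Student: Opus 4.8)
The statement collects several classical identities for $\zeta_n(\{1\}_k)$; I would prove them in a chain, deriving each from the previous one, and anchor the chain in a single combinatorial identity.

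\medskip

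\emph{Step 1: the Stirling-number formula.} I would start from the generating-function identity
\[
\sum_{k\ge 0}\zeta_n(\{1\}_k)\,x^k=\prod_{j=1}^n\Bigl(1+\frac{x}{j}\Bigr)=\frac{1}{n!}\prod_{j=1}^n(x+j)=\frac{(x+1)^{\overline{n}}}{n!},
\]
where the first equality is immediate from the definition $\zeta_n(\{1\}_k)=\sum_{n\ge m_1>\cdots>m_k\ge 1}(m_1\cdots m_k)^{-1}=e_k(1,\tfrac12,\dots,\tfrac1n)$, and the middle one is clearing denominators. Comparing coefficients of $x^k$ with the defining expansion $\prod_{j=1}^{n}(x+j)=\fallfak{(x+n)}{n}\cdot(\text{shift})$, or more directly using $\prod_{j=1}^{n}(x+j)=\sum_{k}\stir{n+1}{k+1}x^k$ (the standard generating function for unsigned Stirling numbers of the first kind, shifted by one because the product runs over $j=1,\dots,n$ rather than $j=0,\dots,n-1$), yields $\zeta_n(\{1\}_k)=\stir{n+1}{k+1}/n!$. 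The signed version $(-1)^{n-k}s(n+1,k+1)/n!$ is then just the relation $\stir{m}{\ell}=(-1)^{m-\ell}s(m,\ell)$ recalled in the introduction.

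\medskip

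\emph{Step 2: the Bell-polynomial / MacDonald form.} Taking the logarithm of the product above,
\[
\log\prod_{j=1}^n\Bigl(1+\frac{x}{j}\Bigr)=\sum_{j=1}^n\sum_{i\ge 1}\frac{(-1)^{i-1}}{i}\frac{x^i}{j^i}=\sum_{i\ge 1}\frac{(-1)^{i-1}}{i}\Ha{n}{i}x^i,
\]
so $\sum_k\zeta_n(\{1\}_k)x^k=\exp\bigl(\sum_{i\ge 1}\frac{(-1)^{i-1}}{i}\Ha{n}{i}x^i\bigr)$. Extracting the coefficient of $x^k$ via the exponential formula gives exactly the multinomial sum over $m_1+2m_2+\cdots=k$ with sign $(-1)^{m_2+m_4+\cdots}$ (each factor $\frac{x^i}{i}\Ha{n}{i}$ contributes $(-1)^{i-1}$, and $\prod_i(-1)^{(i-1)m_i}=(-1)^{\sum_i(i-1)m_i}=(-1)^{m_2+m_4+\cdots}$ since $\sum(i-1)m_i=k-\sum m_i$ has the same parity as $\sum_{i\text{ even}}m_i$... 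I would just verify the parity bookkeeping carefully here). That sum is precisely the MacDonald expression for $P_k$ from Lemma~\ref{lemHoffman}, giving simultaneously the second displayed line and the identity $\zeta_n(\{1\}_k)=P_k(\Ha{n}{1},\dots,\Ha{n}{k})$; rewriting the coefficient $k!$-normalized multinomial sum in terms of the Bell polynomial $B_k$ evaluated at $(-0!\Ha{n}{1},-1!\Ha{n}{2},\dots)$ is the standard dictionary $B_k(a_1,\dots,a_k)=\sum_{\pi}\prod(\cdots)$ between Bell polynomials and the exponential formula, with the factor $(-1)^k/k!$ absorbing the signs $-(i-1)!$ in the arguments.

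\medskip

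\emph{Step 3: the determinant.} The Newton-type determinantal formula expressing an elementary symmetric function (here $P_k$, which equals $e_k$ when fed power sums) in terms of power sums is classical (the Jacobi–Trudi / Newton identity in determinant form); since $\zeta_n(\{1\}_k)=e_k(1,\tfrac12,\dots,\tfrac1n)$ and $p_i(1,\tfrac12,\dots,\tfrac1n)=\Ha{n}{i}$, I substitute into that identity. Concretely, the recurrence $k\,e_k=\sum_{i=1}^k(-1)^{i-1}p_i\,e_{k-i}$ (Newton's identity) is a linear system whose solution by Cramer's rule gives the stated bidiagonal-plus-first-column determinant; I would just cite this and check the entries match (the superdiagonal entries $1,2,\dots,k-1$ come from the coefficients $e_0,2e_0,\dots$ after rescaling rows).

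\medskip

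\emph{Main obstacle.} None of the steps is deep; the only place demanding care is the \emph{sign and index bookkeeping} in Step 2 — matching $(-1)^{m_2+m_4+\cdots}$ against the signs produced by $\log(1+x/j)$ and against the $-(i-1)!$ in the Bell-polynomial arguments, and making sure the Stirling shift $n\mapsto n+1$, $k\mapsto k+1$ in Step 1 is correct (it arises because the product is over $j\in\{1,\dots,n\}$, i.e.\ $\prod_{j=0}^{n}(x+j)/x = \prod_{j=1}^{n}(x+j)$, and $\prod_{j=0}^{n}(x+j)=\sum_{\ell}\stir{n+1}{\ell}x^{\ell}$ gives $\prod_{j=1}^{n}(x+j)=\sum_{\ell}\stir{n+1}{\ell}x^{\ell-1}=\sum_k\stir{n+1}{k+1}x^k$). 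I would present the generating-function identity once, then read off all four forms from it, so that the parity check is done in exactly one place.
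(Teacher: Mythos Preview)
Your proposal is correct and follows essentially the same route as the paper: both identify $\zeta_n(\{1\}_k)=e_k(1,\tfrac12,\dots,\tfrac1n)$ via the product $\prod_{j=1}^n(1+x/j)$, extract the Stirling-number formula from the polynomial expansion, obtain the Bell-polynomial/MacDonald form by taking $\exp$--$\log$ of this product, and then invoke MacDonald's determinantal expression for $e_k$ in terms of power sums (equivalently Newton's identities solved by Cramer's rule). The only cosmetic difference is that the paper sometimes phrases the $\exp$--$\log$ step through the falling factorial $z(z-1)\cdots(z-n+1)$ and offers a bivariate generating-function alternative for the Stirling part, whereas you work uniformly from the single univariate generating function; your parity bookkeeping $(-1)^{\sum_i(i-1)m_i}=(-1)^{m_2+m_4+\cdots}$ is exactly the point that needs checking and is correct.
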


\begin{proof}
From the relation
\[
x(x-1)\cdots (x-n+1)=\sum_{k=0}^n s(n,k)x^k
\]
it follows that $s(n,k)=(-1)^{n-k}e_{n-k}(1,2,\dots,n-1)$, with $e_j$
the $j$th elementary symmetric function. After dividing by $(n-1)!$ we get
\[
\frac{s(n,k)}{(n-1)!}=(-1)^{n-k}\frac{e_{n-k}(1,2,\dots,n-1)}{(n-1)!}
=(-1)^{n-k}e_{k-1}\left(1,\frac12,\dots,\frac1{n-1}\right),
\]
from which the first conclusion follows, since evidently $\zt_{n-1}(\{1\}_{k})=
e_{k}\left(1,\frac12,\dots,\frac1{n-1}\right)$.
Alternatively, the relation to Stirling numbers of first kind can be shown by using generating series. Namely, each summand of $\zt_{n}(\{1\}_{k})$ corresponds to a $k$-elementary set of $[n]=\{1, \dots, n\}$, where each element $i$ gets multiplicative weight $\frac{1}{i}$. This shows the relation
\begin{equation*}
  \zt_{n}(\{1\}_{k}) = [q^{k}] \big(1+\frac{q}{1}\big) \cdot \big(1+\frac{q}{2}\big) \cdot \cdots \cdot \big(1+\frac{q}{n}\big).
\end{equation*}
The bivariate generating series of $\zt_{n}(\{1\}_{k})$ is thus given as follows:
\begin{equation*}
  \sum_{n,k \ge 0} \zt_{n}(\{1\}_{k}) \, z^{n} q^{k} = \sum_{n \ge 0} \binom{n+q}{n} z^{n} = \frac{1}{(1-z)^{q+1}}.
\end{equation*}
Using the well-known bivariate generating series of the unsigned Stirling numbers of first kind~\cite{GKP},
$\sum_{n,k \ge 0} \stir{n}{k} \frac{z^{n}}{n!} q^{k} = \frac{1}{(1-z)^{q}}$, taking the derivative w.r.t.\ $q$ and extracting coefficients, immediately leads to the stated result.

Moreover, from the $\exp-\log$-representation we get
\begin{align*}
z(z-1)\cdots (z-n+1)&=(-1)^{n-1}(n-1)!z\exp(\sum_{k=1}^{n-1}\log(1-\frac{z}{k}))\\
&=(-1)^{n-1}(n-1)!z\exp(\sum_{j\ge 1}(-\Ha{n-1}{j})\frac{z^j}{j}).
\end{align*}
Thus, it follows that $s(n,k)$ can be expressed in terms of the complete Bell polynomials $B_n(x_1,\dots,x_n)$, which are defined via
\[
\exp\Big(\sum_{\ell\ge 1}\frac{x_\ell}{\ell!}z^{\ell}\Big)
= \sum_{j\ge 0}\frac{B_j(x_1,\dots,x_j)}{j!}z^j,
\]
evaluated at $x_\ell=-(\ell-1)!\Ha{n}{\ell}$. Furthermore, according to the definition of the Bell polynomials we obtain the stated expression in terms of the generalized harmonic numbers. In particular, they coincide with the expression for $P_k$
as given in Lemma~\ref{lemHoffman} and thus we get the stated determinant form from the formula for the polynomials $P_k$ by MacDonald~\cite{MacDo}.
%Proof: omitted
%k-1=m_1+2m_2+\dots, 1=(-1)^{k-1}(-1)^{m_1+2m_2+3m_3\dots+m_1+m_2+m_3....}
%=(-1)^{k-1}(-1)^{2m_1+3m_2+4m_3+5m_4+....}
%=(-1)^{k-1}(-1)^{1m_2+1m_4+....}
Another way, avoiding the Stirling numbers of the first kind, is to use the algebraic machinery established
by Hoffman and Ihara~\cite{HI} to obtain directly the Bell polynomial expression.
\end{proof}

Next we discuss expressions for the truncated multiple zeta star values $\zts_n(\{1\}_k)$. 
The first one is usually attributed to Dilcher~\cite{D}, although it had occurred earlier in the literature. The second one goes back to Flajolet and Sedgewick~\cite{FS}. Both expressions have been rediscovered a few times. For related results, a comprehensive historical discussion, as well as many different proofs, see, e.g., the work of Batir~\cite{B}. Moreover, parts of the results stated below, as well as a few other expressions, can also be found in the work of Bai et al.~\cite{Bai}.
\begin{lem}[Truncated zeta star values $\zts_n(\{1\}_k)$]
\label{ztsTrunc}
For positive integers $n \ge k$, the values $\zts_{n}(\{1\}_{k})$ can be expressed as 
\[
\zts_{n}(\{1\}_{k})=\sum_{j=1}^{n}\binom{n}j\frac{(-1)^{j-1}}{j^k},
\]
in terms of Bell polynomials and generalized Harmonic numbers,
\begin{align*}
\zts_{n}(\{1\}_{k})&=\frac{1}{(k-1)!}(B_{k-1}(0!\Ha{n}{1},1!\Ha{n}{2},\dots,(k-1)!\Ha{n}{k})\\
&=\sum_{m_1+2m_2+\dots=k}\frac{1}{m_1!m_2!\dots}\Big(\frac{\Ha{n}{1}}{1}\Big)^{m_1} \Big(\frac{\Ha{n}{2}}{2}\Big)^{m_2}\dots,
\end{align*}
as well as by the polynomials $Q_k$ and via determinants:
\[
\zts_{n}(\{1\}_{k})=Q_k(H_n^{(1)},\dots,H_n^{(k)}) =\frac{1}{k!}
\left|
\begin{matrix}
\Ha{n}{1} & -1 & 0 & \dots &0\\
\Ha{n}2 & \Ha{n}{1} & -2 & \dots &0\\
\hdots & \hdots & \hdots & \vdots &\hdots\\
\Ha{n}{k-1} & \Ha{n}{k-2} & \Ha{n}{k-3} & \dots &-(k-1)\\
\Ha{n}{k} & \Ha{n}{k-1} & \Ha{n}{k-2} & \dots &\Ha{n}1\\
\end{matrix}
\right|.
\]
Moreover, they satisfy the recurrence relation
\[
\zeta_{n}^{\ast}(\{1\}_{k})=\zeta_{n-1}^{\ast}(\{1\}_{k}) + \frac1n \zeta_{n}^{\ast}(\{1\}_{k-1}),\quad k\ge 1,
\]
with $\zeta_{n}^{\ast}(\{1\}_{0})=1$, such that
\[
\zeta_{n}^{\ast}(\{1\}_{k})=\sum_{m=0}^{k} \frac1{n^{m}}\zeta_{n-1}^{\ast}(\{1\}_{k-m}).
\]
\end{lem}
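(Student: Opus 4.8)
The plan is to derive every assertion from the single observation that $\zts_n(\{1\}_k)$ is the complete homogeneous symmetric function $h_k$ evaluated at $1,\tfrac12,\dots,\tfrac1n$: each summand of $\zts_n(\{1\}_k)=\sum_{n\ge n_1\ge\cdots\ge n_k\ge 1}\frac{1}{n_1\cdots n_k}$ corresponds to a size-$k$ multiset of $\{1,\dots,n\}$ with element $i$ weighted by $1/i$. Consequently
\[
\zts_n(\{1\}_k)=[q^k]\prod_{i=1}^n\frac{1}{1-q/i}=[q^k]\,\frac{n!}{(1-q)(2-q)\cdots(n-q)}.
\]
For the binomial-sum formula I would decompose the rational function $\frac{n!}{\prod_{i=1}^n(i-q)}$ into partial fractions in the variable $q$. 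Its only poles are the simple poles at $q=1,\dots,n$, and a short residue computation uses $\prod_{i\ne j}(i-j)=(-1)^{j-1}(j-1)!\,(n-j)!$ to give
\[
\frac{n!}{\prod_{i=1}^n(i-q)}=\sum_{j=1}^n(-1)^{j-1}\binom{n}{j}\frac{1}{1-q/j}.
\]
Extracting $[q^k]$ from both sides yields $\zts_n(\{1\}_k)=\sum_{j=1}^n\binom{n}{j}\frac{(-1)^{j-1}}{j^k}$.

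For the Bell-polynomial, power-sum, $Q_k$ and determinant expressions I would pass to logarithms in the product,
\[
\prod_{i=1}^n\frac{1}{1-q/i}=\exp\Bigl(-\sum_{i=1}^n\log\bigl(1-\tfrac{q}{i}\bigr)\Bigr)=\exp\Bigl(\sum_{j\ge 1}\frac{\Ha{n}{j}}{j}\,q^j\Bigr).
\]
Comparing coefficients with the defining generating series of the complete Bell polynomials gives the stated Bell-polynomial form; expanding the exponential of the sum as a product of exponentials and reading off $[q^k]$ gives directly the multinomial expression $\sum_{m_1+2m_2+\cdots=k}\frac{1}{m_1!m_2!\cdots}(\Ha{n}{1}/1)^{m_1}(\Ha{n}{2}/2)^{m_2}\cdots$, which by Lemma~\ref{lemHoffman} is exactly $Q_k(\Ha{n}{1},\dots,\Ha{n}{k})$; and the determinant form is then read off from MacDonald's determinantal formula for $Q_k$, precisely as in the proof of Lemma~\ref{stirmzv}.

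Finally, for the recurrence I would argue directly on the nested sum: splitting the range of the outermost summation index into the cases $n_1\le n-1$ and $n_1=n$ gives at once $\zts_n(\{1\}_k)=\zts_{n-1}(\{1\}_k)+\frac1n\,\zts_n(\{1\}_{k-1})$ for $k\ge 1$, with $\zts_n(\{1\}_0)=1$. The closed form $\zts_n(\{1\}_k)=\sum_{m=0}^k\frac{1}{n^m}\zts_{n-1}(\{1\}_{k-m})$ then follows by a one-line induction on $k$ using this recurrence (equivalently, by grouping the summands of $\zts_n(\{1\}_k)$ according to the number $m$ of indices equal to $n$, the remaining indices then running over $\{1,\dots,n-1\}$). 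None of these steps is a genuine obstacle; the only point requiring care is the bookkeeping of signs and factorials when matching the exponential expansion to the complete Bell polynomials and to MacDonald's formula for $Q_k$, so I would carry out that identification in detail and leave the residue computation and the inductive step as short routine calculations.
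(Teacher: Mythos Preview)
Your proposal is correct and, for the Bell-polynomial/$Q_k$/determinant expressions and for the recurrence, it follows the paper's proof essentially line for line: the paper also starts from the multiset interpretation to obtain the generating series $\zts_n(\{1\}_k)=[q^k]\prod_{i=1}^n(1-q/i)^{-1}$, applies the $\exp$-$\log$ trick to reach the Bell-polynomial and hence the $Q_k$ expression via Lemma~\ref{lemHoffman}, and invokes MacDonald for the determinant; and it derives the recurrence by the same splitting of the outermost index.

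The one genuine difference is in the binomial-sum identity. The paper proves $\zts_n(\{1\}_k)=\sum_{j=1}^n\binom{n}{j}\tfrac{(-1)^{j-1}}{j^k}$ by a direct induction on $k$ (using $\sum_{\ell=k}^n\binom{\ell-1}{k-1}=\binom{n}{k}$ to peel off one layer of the nested sum), whereas you stay inside the generating-function framework and obtain it from the partial-fraction decomposition of $n!/\prod_i(i-q)$. Your route is arguably cleaner here, since it derives all the identities from the single generating series rather than treating the binomial sum separately; the paper's induction, on the other hand, is completely self-contained and avoids the residue bookkeeping. Either way the argument is short, and your residue computation (with $\prod_{i\neq j}(i-j)=(-1)^{j-1}(j-1)!\,(n-j)!$) is correct.
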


\begin{remark}
\label{remPWschlkt}
The truncated zeta star values $\zts_{n}(\{1\}_{k})$ are also of importance in different fields of applications.
For example, let $X_j=\Exp(1)$, $1\le j\le n$, denote mutually independent standard exponentially distributed random variables.
The maximum $Z_{n} = \max\{X_1,\dots,X_n\}$ has density $f_{Z_{n}}(x)=ne^{-x}(1-e^{-x})^{n-1}$, for $x \ge 0$.
Since
\begin{align*}
  & \int_{0}^{\infty} x^{k} \, f_{Z_{n}}(x) dx = n \sum_{j=0}^{n-1} \binom{n-1}{j} (-1)^{j} \int_{0}^{\infty} x^{k} e^{-(1+j)x} dx\\
	& \quad = n \sum_{j=0}^{n-1} \binom{n-1}{j} (-1)^{j} \frac{1}{(1+j)^{k+1}} \int_{0}^{\infty} t^{k} e^{-t} dt 
	= k! \sum_{j=1}^{n} \binom{n}{j} (-1)^{j-1} \frac{1}{j^{k}},
\end{align*}
an application of Lemma~\ref{ztsTrunc} implies that the positive integer moments of $Z_{n}$ are given in terms of the truncated multiple zeta star values:
\begin{equation*}
  \E(Z_n^k) = n \int_{0}^{\infty} x^{k} e^{-x} (1-e^{-x})^{n-1} dx = k! \, \zts_n(\{1\}_k).
\end{equation*}
We note in passing that the limit $n\to\infty$ of $Z_n$, suitably centered and normalized, 
tends to a Gumbel distributed random variable $G$, whose cumulants $\kappa_s(G)$ are also related to zeta values: 
$\kappa_s(G)=(s-1)! \, \zeta(s)$, $s\ge 2$. 

\smallskip

Moreover, $\zts_{n}(\{1\}_{k})$ also occurs in connection with the number of maxima in hypercubes, see~\cite{Bai}
for several additional different expressions; for example, let $P=1-U_1U_2\dots U_{k+1}$, where the $U_i$, $1 \le i \le k+1$, denote independent standard uniformly distributed random variables. Then, again by an application of Lemma~\ref{ztsTrunc}, one gets
$$\E(P^{n-1})=\frac1n\zts_{n}(\{1\}_{k})
%n\cdot\int_{[0,1]^{k+1}}(1-x_1x_2\dots x_{k+1})^{n-1}d(x_1,\dots,x_{k+1})=
%n\sum_{j=0}^{n-1}\binom{n-1}j\frac{(-1)^{j}}{(j+1)^{k+1}}=
%\sum_{j=0}^{n-1}\binom{n}{j+1}\frac{(-1)^{j}}{(j+1)^{k}}=
%\sum_{j=1}^{n}\binom{n}j\frac{(-1)^{j-1}}{j^{k}}
.$$
\end{remark}

For the sake of completeness we collect short proofs of Lemma~\ref{ztsTrunc}.
\begin{proof}
The first part follows by induction and repeated usage of the formula
$\sum_{\ell=k}^{n}\binom{\ell-1}{k-1}=\binom{n}{k}$, as well as 
$\sum_{\ell=1}^{k}(-1)^{\ell-1}\binom{k}{\ell}=1$.
Following~\cite{FS}, the second part is based on complex analysis and a Rice integral representation. 
The argument can be reduced to a coefficient extraction from the generating series of $\zts_n(\{1\}_k)$:
\[
\zts_n(\{1\}_k)=[q^{k}]\frac{1}{(1-\frac{q}{1})(1-\frac{q}{2})\dots (1-\frac{q}{n})}.
\]
We remark that the generating series expression also follows from the observation that each summand of $\zts_{n}(\{1\}_{k})$ corresponds to a $k$-elementary multiset of $[n]$, where each element $i$ gets multiplicative weight $\frac{1}{i}$. 
Proceeding as in the proof of Lemma~\ref{stirmzv} using the $\exp-\log$-representation leads to the stated expression in terms of the Bell polynomials.
Alternatively, the algebraic machinery of~\cite{HI} also leads to the Bell polynomial expression, avoiding complex analysis.
It coincides with the expression for $Q_k$ as given in Lemma~\ref{lemHoffman}. The determinant form follows readily from the formula for the polynomials $Q_k$ by MacDonald~\cite{MacDo}.
Finally, the stated recurrence relation immediately follows from the definition:
\begin{align*}
\sum_{N\ge n_1\ge \cdots\ge n_k\ge 1}\frac1{n_1^{i_1}\cdots n_k^{i_k}}
&=\sum_{N-1\ge n_1\ge \cdots\ge n_k\ge 1}\frac1{n_1^{i_1}\cdots n_k^{i_k}}\\
&\quad+ \sum_{N=n_1\ge n_2\ge  \cdots\ge n_k\ge 1}\frac1{n_1^{i_1}\cdots n_k^{i_k}}.
\end{align*}
It is readily solved and leads directly to the stated result.

\end{proof}

\section{Proof of the main results\label{sec:ProofResults}}
%
%
%\begin{remark}
%\label{remMacDo}
%Following Hoffman, we note that there also exist determinantal formulas for the polynomials by MacDonald~\cite{MacDo}, 
%\[
%P_k(x_1,\dots,x_k)=\frac{1}{k!}
%\left|
%\begin{matrix}
%x_1 & 1 & 0 & \dots &0\\
%x_2 & x_1 & 2 & \dots &0\\
%\hdots & \hdots & \hdots & \vdots &\hdots\\
%x_{k-1} & x_{k-2} & x_{k-3} & \dots &k-1\\
%x_{k} & x_{k-1} & x_{k-2} & \dots &x_1\\
%\end{matrix}
%\right|,
%\]
%and
%\[
%Q_k(x_1,\dots,x_k)=\frac{1}{k!}\left|
%\begin{matrix}
%x_1 & -1 & 0 & \dots &0\\
%x_2 & x_1 & -2 & \dots &0\\
%\hdots & \hdots & \hdots & \vdots &\hdots\\
%x_{k-1} & x_{k-2} & x_{k-3} & \dots &-(k-1)\\
%x_{k} & x_{k-1} & x_{k-2} & \dots &x_1\\
%\end{matrix}
%\right|.
%\]
%\end{remark}

\begin{proof}[Proof of Theorem~\ref{thmHoffman}]
The equivalence between Hoffman's sum and $S$ follows readily by Lemmata~\ref{stirmzv} and~\ref{ztsTrunc}, 
expressing $\zt_{n-1}(\{1\}_k)$ and $\zts_{n-1}(\{1\}_k)$ in terms of the polynomials $P_k$ and $Q_k$.
\end{proof}

\subsection{Proofs for the Stirling series}
We collect a basic result (see Graham, Knuth, Patashnik~\cite{GKP}) concerning finite differences.
\begin{lem}{(Finite differences)}
\label{LemmaKnuth}
For $x\notin\{0,-1,\dots,-n\}$ it holds
\[
\frac{1}{x\binom{x+m}{m}}=\sum_{k=0}^{m}\binom{m}{k}\frac{(-1)^k}{x+k}.
\]
\end{lem}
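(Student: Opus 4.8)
The plan is to prove the partial-fraction identity
\[
\frac{1}{x\binom{x+m}{m}}=\sum_{k=0}^{m}\binom{m}{k}\frac{(-1)^k}{x+k}
\]
by a direct partial-fraction decomposition of the left-hand side. First I would rewrite the binomial coefficient explicitly: since $\binom{x+m}{m}=\frac{(x+m)(x+m-1)\cdots(x+1)}{m!}$, the left-hand side equals $\frac{m!}{x(x+1)(x+2)\cdots(x+m)}$, a rational function whose denominator is a product of $m+1$ distinct linear factors $x+k$ for $k=0,1,\dots,m$. Hence it admits a partial-fraction expansion of the form $\sum_{k=0}^{m}\frac{c_k}{x+k}$, and the task reduces to computing the residues $c_k$.

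The key step is the residue computation: $c_k=\lim_{x\to -k}(x+k)\cdot\frac{m!}{x(x+1)\cdots(x+m)}$, where the factor $(x+k)$ cancels the corresponding singular factor in the denominator, leaving $c_k=\frac{m!}{\prod_{j=0,\,j\neq k}^{m}(j-k)}$. The remaining product splits into the factors with $j<k$, contributing $\prod_{j=0}^{k-1}(j-k)=(-1)^k k!$, and the factors with $j>k$, contributing $\prod_{j=k+1}^{m}(j-k)=(m-k)!$. Therefore $c_k=\frac{m!}{(-1)^k k!\,(m-k)!}=(-1)^k\binom{m}{k}$, which is exactly the claimed coefficient. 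This establishes the identity as an equality of rational functions, hence for all $x\notin\{0,-1,\dots,-m\}$ (the hypothesis $x\notin\{0,-1,\dots,-n\}$ in the statement covers this, and in fact a bit more).

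I do not expect a genuine obstacle here, as this is the classical finite-difference formula recorded in Graham--Knuth--Patashnik; the only point requiring a little care is the bookkeeping of signs when splitting the product $\prod_{j\neq k}(j-k)$ into its negative part ($j<k$) and positive part ($j>k$). Alternatively, one could give an induction on $m$ using Pascal's rule $\binom{m}{k}=\binom{m-1}{k}+\binom{m-1}{k-1}$ together with the telescoping identity $\frac{1}{x+k}=\frac{1}{m}\bigl(\frac{x+m}{?}\bigr)$-type manipulation, but the residue argument is the cleanest and I would present that.
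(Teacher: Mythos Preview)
Your proof is correct. The residue computation is clean and the bookkeeping of signs is handled properly; you also correctly observe that the hypothesis should read $x\notin\{0,-1,\dots,-m\}$ rather than $-n$.

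The paper takes a different route, using the forward difference operator $\Delta=E-I$ (with $E$ the shift and $I$ the identity). Expanding $\Delta^{m}=(E-I)^{m}$ by the binomial theorem gives $\sum_{k=0}^{m}\binom{m}{k}(-1)^{m-k}E^{k}$. On one hand, applying each term to $f(x)=\frac{1}{x}$ yields $\sum_{k}\binom{m}{k}(-1)^{m-k}\frac{1}{x+k}$; on the other hand, since $\Delta$ acts as a derivative on falling factorials ($\Delta\, x^{\underline{n}}=n\,x^{\underline{n-1}}$, also for negative $n$), one gets $\Delta^{m}\frac{1}{x}=\frac{(-1)^{m}m!}{x(x+1)\cdots(x+m)}$, and equating the two expressions gives the identity. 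Your partial-fraction argument and the paper's operator argument are essentially dual ways of reading the same factorisation: you fix the product $x(x+1)\cdots(x+m)$ and compute residues, while the paper builds that product step by step via iterated differences. The operator approach is the one recorded in Graham--Knuth--Patashnik and explains the name ``finite differences''; your approach is arguably more self-contained since it does not invoke the falling-factorial calculus, at the cost of a small sign-tracking computation. Either is perfectly acceptable here.
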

\begin{proof}
Let $\Delta=E-I$ be the forward difference operator, with $E$ denoting the shift operator and $I$ the identity, such that
$\Delta f(x)= f(x+1)-f(x)$. By the binomial theorem 
\[
\Delta^m=(E-I)^m=\sum_{k=0}^{m}\binom{m}{k}E^k(-1)^{m-k}.
\]
$\Delta$ acts as a derivative operator on falling factorials, i.e., $\Delta x^{\underline{n}} = n x^{\underline{n-1}}$, for $n \in \mathbb{Z}$. Hence, the application of $\Delta^m$ to $\frac{1}{x}$ gives $\frac{m!(-1)^m }{x(x+1)\dots (x+m)}$. On the other hand, $E^k\frac{1}{x}=\frac{1}{x+k}$ and the result follows.
\end{proof}

Next we collect a result, which generalizes a result of Batir~\cite{B}. We note in passing that a much more general result is due to Hoffman. It is collected later on in Lemma~\ref{lemBr}, together with a simple proof.
\begin{lem}[Nested truncated zeta star values]
\label{ztsNested}
The truncated zeta star values satisfy, for integers $n \ge 1$ and $k, \ell \ge 0$:
\[
\sum_{j=1}^{n}\binom{n}j\frac{(-1)^{j-1}}{j^k}\zts_j(\{1\}_{\ell}) =
\begin{cases}
  \zts_{n}(\{1\}_{k-1},\ell+1), & \quad \text{for $k \ge 1$},\\
	\frac{1}{n^{\ell}}, & \quad \text{for $k=0$}.
\end{cases}
\]
\end{lem}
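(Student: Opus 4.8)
The plan is to prove the identity by induction on $k$, using the known generating-series / binomial representations of the truncated multiple zeta star values from Lemma~\ref{ztsTrunc}, together with the finite-difference identity of Lemma~\ref{LemmaKnuth}. The base case $k=0$ is essentially a statement about finite differences: by Lemma~\ref{ztsTrunc} we have $\zts_j(\{1\}_\ell)=\sum_{m=1}^{j}\binom{j}{m}\frac{(-1)^{m-1}}{m^\ell}$ (taking $n=j$ there), so the left-hand side becomes a double binomial sum $\sum_{j=1}^n\sum_{m=1}^j\binom{n}{j}\binom{j}{m}\frac{(-1)^{j-1}(-1)^{m-1}}{m^\ell}$. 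Swapping the order of summation and using the subset-of-subset identity $\binom{n}{j}\binom{j}{m}=\binom{n}{m}\binom{n-m}{j-m}$, the inner sum over $j$ collapses via $\sum_{j=m}^n\binom{n-m}{j-m}(-1)^{j-1}=(-1)^{m-1}[n=m]$; what survives is exactly $\binom{n}{n}\frac{1}{n^\ell}=\frac{1}{n^\ell}$, which is the claimed value for $k=0$.

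For the inductive step, assume the identity for $k-1$ (all $n,\ell$). The cleanest route is to express everything through the generating series $g_n(q):=\prod_{i=1}^n(1-q/i)^{-1}=\sum_{\ell\ge 0}\zts_n(\{1\}_\ell)q^\ell$ from Lemma~\ref{ztsTrunc}, so that $\sum_{j=1}^n\binom{n}{j}\frac{(-1)^{j-1}}{j^k}\zts_j(\{1\}_\ell)=[q^\ell]\sum_{j=1}^n\binom{n}{j}\frac{(-1)^{j-1}}{j^k}g_j(q)$. Writing $\frac{1}{j^k}=\frac{1}{j}\cdot\frac{1}{j^{k-1}}$ and using the recurrence $\zts_j(\{1\}_{k-1})=\zts_{j-1}(\{1\}_{k-1})+\frac1j\zts_j(\{1\}_{k-2})$ from Lemma~\ref{ztsTrunc} is one option, but I expect the more efficient device to be partial summation in $j$: note $\frac{1}{j^k}=\frac{1}{j}\cdot\frac{1}{j^{k-1}}$ and relate $\sum_j\binom{n}{j}\frac{(-1)^{j-1}}{j}(\cdots)$ to $\sum_j\binom{n}{j}\frac{(-1)^{j-1}}{j^{k-1}}(\cdots)$ via the telescoping identity $\frac1j\binom{n}{j}=\frac1n\binom{n-1}{j-1}$ combined with $\frac1n\sum_{i=1}^n=$ an extra unit index. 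Concretely, one shows
\[
\sum_{j=1}^n\binom{n}{j}\frac{(-1)^{j-1}}{j^k}a_j=\sum_{i=1}^n\frac1i\sum_{j=1}^i\binom{i}{j}\frac{(-1)^{j-1}}{j^{k-1}}a_j
\]
for any sequence $(a_j)$ — this follows from Lemma~\ref{LemmaKnuth} applied with $x=j$, or from a direct binomial manipulation — and then applies the induction hypothesis to the inner sum with $\ell$ fixed, turning it into $\sum_{i=1}^n\frac1i\,\zts_i(\{1\}_{k-2},\ell+1)$, which by the defining summation of $\zts$ is exactly $\zts_n(\{1\}_{k-1},\ell+1)$.

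The main obstacle will be getting the auxiliary ``one-step'' identity displayed above stated and proved in the right generality: it must hold for the sequence $a_j=\zts_j(\{1\}_\ell)$ (or for all $a_j$), and one has to be careful that the passage from $1/j^k$ to $1/j^{k-1}$ introduces the correct nested sum $\sum_{i}\frac1i$ that reconstitutes the extra ``$1$'' slot in $\zts_n(\{1\}_{k-1},\ell+1)$ rather than, say, an $\ell+1$ slot somewhere else. A convenient way to avoid index bookkeeping errors is to verify the whole statement at the level of the bivariate generating function: multiply the claimed identity by $q^\ell z^n$, sum over $\ell,n$, and check that both sides agree as formal power series, using $\sum_n\binom{n}{j}z^n=z^j/(1-z)^{j+1}$ and the product form of $g_n(q)$; the recurrence structure then falls out of a single functional equation. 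I would present the induction proof as the main argument, since it is shortest, and relegate the generating-function check to a remark. For $\ell=0$ the claim reads $\sum_{j=1}^n\binom{n}{j}\frac{(-1)^{j-1}}{j^k}=\zts_n(\{1\}_{k-1},1)=\zts_n(\{1\}_k)$, recovering the Dilcher formula of Lemma~\ref{ztsTrunc}, which is a useful consistency check built into the statement.
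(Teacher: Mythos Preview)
Your proposal is correct and follows essentially the same route as the paper: the central device is the one-step identity
\[
\sum_{j=1}^n\binom{n}{j}\frac{(-1)^{j-1}}{j^k}a_j=\sum_{i=1}^n\frac1i\sum_{j=1}^i\binom{i}{j}\frac{(-1)^{j-1}}{j^{k-1}}a_j,
\]
iterated (or, equivalently, used inductively) down to $k=0$, together with the base case $\sum_{j=1}^n\binom{n}{j}(-1)^{j-1}\zts_j(\{1\}_\ell)=1/n^{\ell}$, which the paper obtains by binomial inversion of Lemma~\ref{ztsTrunc} and which your explicit double-sum computation simply spells out. One minor correction: the one-step identity comes from the hockey-stick formula $\binom{n}{j}=\sum_{i=j}^{n}\binom{i-1}{j-1}$ combined with $\binom{i-1}{j-1}=\tfrac{j}{i}\binom{i}{j}$, not from Lemma~\ref{LemmaKnuth}; and in your inductive step the case $k=1$ (where the induction hypothesis produces $1/i^{\ell}$ rather than $\zts_i(\{1\}_{k-2},\ell+1)$) should be stated separately, though it goes through at once.
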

\begin{proof}
We use the fact that $\sum_{i=j}^{n}\binom{i-1}{j-1}=\binom{n}{j}$. 
Thus, for any sequence $(a_j)_{j\in\N}$ and $k\ge 1$, we have
\begin{align*}
&\sum_{j=1}^{n}\binom{n}j (-1)^{j-1} \frac{a_j}{j^k}= \sum_{j=1}^{n}\sum_{i=j}^{n}\binom{i-1}{j-1}(-1)^{j-1}\frac{a_j}{j^k}
=\sum_{i=1}^{n}\frac1i \sum_{j=1}^{i}\binom{i}{j}(-1)^{j-1}\frac{a_j}{j^{k-1}}.
\end{align*}
Iterating this argument implies an often rediscovered nested sum expression:
\begin{equation}
\label{BradleySimple}
\sum_{j=1}^{n}\binom{n}j(-1)^{j-1} \frac{a_j}{j^k} = \sum_{i_1=1}^{n}\frac1{i_1} \sum_{i_{2}=1}^{i_{1}} \frac{1}{i_{2}} \dots \sum_{i_{k}=1}^{i_{k-1}}\frac1{i_{k}}\sum_{j=1}^{i_{k}}\binom{i_{k}}{j}(-1)^{j-1}a_j.
\end{equation}

Starting from the expression above we set $a_j=\zts_j(\{1\}_{\ell})$. Here, we can give two different arguments yielding Lemma~\ref{ztsNested}.

First, we use the inversion formula (see for example~\cite{GKP}):
\[
a_n=\sum_{j=1}^n\binom{n}{j}(-1)^j b_j \quad\Leftrightarrow \quad
b_n=\sum_{j=1}^n\binom{n}{j}(-1)^j a_j.
\]
By Lemma~\ref{ztsTrunc} this implies that 
\[
\frac{1}{n^{\ell}}=\sum_{j=1}^n\binom{n}{j}(-1)^{j-1} \zts_j(\{1\}_{\ell}),
\]
which, in passing, shows the case $k=0$, and further
\[
\sum_{j=1}^{n}\binom{n}j\frac{(-1)^{j-1}}{j^k}\zts_j(\{1\}_{\ell})
=\sum_{i_1=1}^{n}\frac1{i_1}\dots \sum_{i_{k}=1}^{i_{k-1}}\frac1{i_{k}}\cdot \frac{1}{i_k^{\ell}}
=\zts_n(\{1\}_{k-1},\ell+1),
\]
which proves the stated result for $k \ge 1$. 

\smallskip 

Second, we change the order of summation:
\[
\sum_{j=1}^{i_{k}}\binom{i_{k}}{j}(-1)^{j-1}\zts_j(\{1\}_{\ell})=
\sum_{h=1}^{i_k}\frac{\zts_h(\{1\}_{\ell-1})}{h}\sum_{j=h}^{i_k}\binom{i_{k}}{j}(-1)^{j-1}.
\]
Writing $\binom{i_{k}}{j} (-1)^{j-1} = \binom{i_{k}-1}{j} (-1)^{j+1} - \binom{i_{k}-1}{j-1} (-1)^{j}$, the latter sum telescopes yielding
\[
\sum_{j=h}^{i_k}\binom{i_{k}}{j}(-1)^{j-1}=(-1)^{h-1}\binom{i_{k}-1}{h-1}=(-1)^{h-1}\frac{h}{i_k}\binom{i_{k}}{h}. 
\]
Thus, for $\ell \ge 1$,
\begin{equation}
\sum_{j=1}^{i_{k}}\binom{i_{k}}{j}(-1)^{j-1}\zts_j(\{1\}_{\ell}) = \frac{1}{i_k}\sum_{h=1}^{i_k} \binom{i_{k}}{h} (-1)^{h-1}\zts_h(\{1\}_{\ell-1}).
\label{Iterating}
\end{equation}
Iterating this recurrence relation and using the case $\ell=0$, $\sum_{j=1}^{i_{k}}\binom{i_{k}}{j}(-1)^{j-1}=1$, yields for $\ell \ge 0$
\[
\sum_{j=1}^{i_{k}}\binom{i_{k}}{j}(-1)^{j-1}\zts_j(\{1\}_{\ell}) = \frac{1}{i_k^{\ell}},
\]
which, together with \eqref{BradleySimple}, also proves Lemma~\ref{ztsNested}.
\end{proof}

\begin{proof}[Proof of Theorem~\ref{thmStirlingFinite}]
The relation between the truncated series $S_{N}$ and the Stirling series follows directly from Lemma~\ref{stirmzv}. In order to evaluate $S_N$ for $\ell_1=0$ 
we proceed similar to the proofs of a certain evaluation by Panholzer and Prodinger~\cite{PP}, see also~\cite{PK2010}. By Lemma~\ref{LemmaKnuth} we have
\[
\frac{1}{r_1\binom{n+r_1}{r_1}}=\frac{1}{(n+1)\binom{n+r_1}{r_1-1}}=\sum_{k=0}^{r_1-1}\binom{r_1-1}{k}\frac{(-1)^k}{n+1+k},
\]
which gives the representation
\[
S_N=r_1\sum_{k=1}^{r_1}(-1)^{k+1}\binom{r_1-1}{k-1}\sum_{n=1}^{N}\frac{\zt_{n-1}(\{1\}_{\ell_{2}})}{(n+k)n^{r_2}}.
\]
In order to proceed we use partial fraction decomposition:
\begin{equation}
\label{parfrac}
\frac1{(n+k)n^{r_2}}=\sum_{m=2}^{r_2}\frac{(-1)^{r_2-m}}{n^m k^{r_2+1-m}} + \frac{(-1)^{r_2+1}}{k^{r_2}}\Big(\frac{1}{n}-\frac{1}{n+k}\Big).
\end{equation}
This implies that the sum $S_N$ can be decomposed into two parts $S_{N;1}$ and $S_{N;2}$:
\begin{align*}
S_N&=r_1\sum_{k=1}^{r_1}(-1)^{k+1}\binom{r_1-1}{k-1}
\bigg[\sum_{m=2}^{r_2}\frac{(-1)^{r_2-m}}{ k^{r_2+1-m}} \sum_{n=1}^{N} \frac{\zt_{n-1}(\{1\}_{\ell_{2}})}{n^m}\bigg]\\
&+r_1\sum_{k=1}^{r_1}(-1)^{k+1}\binom{r_1-1}{k-1}\frac{(-1)^{r_2+1}}{k^{r_2}} \sum_{n=1}^{N}\zt_{n-1}(\{1\}_{\ell_{2}})\Big(\frac{1}{n}-\frac{1}{n+k}\Big).
\end{align*}
The first part is readily simplified:
\begin{align*}
S_{N;1}&=r_1\sum_{k=1}^{r_1}(-1)^{k+1}\binom{r_1-1}{k-1}\sum_{m=2}^{r_2}\frac{(-1)^{r_2-m}}{ k^{r_2+1-m}}
\zt_{N}(m,\{1\}_{\ell_{2}})\\
&=\sum_{m=2}^{r_2}(-1)^{r_2-m}\zt_{N}(m,\{1\}_{\ell_{2}}) \sum_{k=1}^{r_1}(-1)^{k+1}\binom{r_1}{k}\frac1{k^{r_2-m}}\\
&=\sum_{m=2}^{r_2}(-1)^{r_2-m}\zt_{N}(m,\{1\}_{\ell_{2}})\zts_{r_1}(\{1\}_{r_2-m}).
\end{align*}
For the second part we introduce a new notation: let 
\begin{equation}
T_N(\ell,k):= \sum_{n=1}^{N}\zt_{n-1}(\{1\}_{\ell})\Big(\frac{1}{n}-\frac{1}{n+k}\Big).
\label{truncatedT}
\end{equation}
For $\ell=0$ we have 
\begin{align*}
T_N(0,k)&=\sum_{n=1}^{N}\zt_{n-1}(\{1\}_{0})\Big(\frac{1}{n}-\frac{1}{n+k}\Big)
=\sum_{n=1}^{N}\Big(\frac{1}{n}-\frac{1}{n+k}\Big)
=H_N - H_{N+k} +H_k.
\end{align*}

%Case l=1
%We have 
%\begin{align*}
%T_N(1,k)&=\sum_{n=1}^{N}\zt_{n-1}(1)\Big(\frac{1}{n}-\frac{1}{n+k}\Big)
%=\sum_{j=1}^{N-1}\frac1j \sum_{n=j+1}^N \Big(\frac{1}{n}-\frac{1}{n+k}\Big)\\
%&=\sum_{j=1}^{N-1}\frac1j\Big(H_N-H_j - H_{N+k}+H_{j+k}\Big)
%=\sum_{j=1}^{N-1}\frac1j\Big(\sum_{i=1}^{k}\frac1{j+i} +H_N - H_{N+k}\Big)\\
%&=H_{N-1}(H_N - H_{N+k}) +\sum_{i=1}^{k}\frac1i \sum_{j=1}^{N-1}\Big(\frac1j-\frac1{j+i}\Big)\\
%&=H_{N-1}(H_N - H_{N+k}) +\sum_{i=1}^{k}\frac1i (H_{N-1}-H_{N+i-1}+H_i)\\
%&=H_{N-1}(H_N - H_{N+k}) +\sum_{i=1}^{k}\frac{H_{N-1}-H_{N+i-1}}i +\zts_k(\{1\}_2).
%\end{align*}
For $\ell\ge 1$ we get a recurrence relation:
\begin{align*}
T_N(\ell,k)&=\sum_{n=1}^{N}\zt_{n-1}(\{1\}_\ell)\Big(\frac{1}{n}-\frac{1}{n+k}\Big)
=\sum_{j=1}^{N-1}\frac{\zt_{j-1}(\{1\}_{\ell-1})}j \sum_{n=j+1}^N \Big(\frac{1}{n}-\frac{1}{n+k}\Big)\\
&=\sum_{j=1}^{N-1}\frac{\zt_{j-1}(\{1\}_{\ell-1})}j\Big(H_N-H_j - H_{N+k}+H_{j+k}\Big)\\
&=\sum_{j=1}^{N-1}\frac{\zt_{j-1}(\{1\}_{\ell-1})}j\Big(\sum_{i=1}^{k}\frac1{j+i} +H_N - H_{N+k}\Big)\\
&=\zt_{N-1}(\{1\}_{\ell})(H_N - H_{N+k}) +\sum_{i=1}^{k}\frac1i \sum_{j=1}^{N-1}\zt_{j-1}(\{1\}_{\ell-1})\Big(\frac1j-\frac1{j+i}\Big)\\
&=\zt_{N-1}(\{1\}_{\ell})(H_N - H_{N+k}) +\sum_{i=1}^{k}\frac1i T_{N-1}(\ell-1,i).
\end{align*}
In the following let $k_0=k$. Assuming $N \ge \ell$ gives, by unwinding the recurrence relation, a nested sum:
\begin{align*}
T_N(\ell,k)&=R_N(\ell,k)+ \zts_k(\{1\}_{\ell+1}),
\end{align*}
with
\begin{equation}
R_N(\ell,k)=\sum_{j=0}^{\ell}\bigg[ \sum_{k_1=1}^{k_{0}}\frac1{k_1}\sum_{k_2=1}^{k_1}\frac1{k_2}\dots
\sum_{k_{j}=1}^{k_{j-1}} \frac{1}{k_{j}} \zt_{N-1-j}(\{1\}_{\ell-j}) (H_{N-j} - H_{N-j+k_{j}})\bigg].
\label{ErrorTermR}
\end{equation}
Collecting our intermediate results we get
\begin{align*}
S_{N;2}&=\sum_{k=1}^{r_1}(-1)^{k+1}\binom{r_1}{k}\frac{(-1)^{r_2+1}}{k^{r_2-1}}T_N(\ell_{2},k)\\
&=\sum_{k=1}^{r_1}(-1)^{k+1}\binom{r_1}{k}\frac{(-1)^{r_2+1}}{k^{r_2-1}}(\zts_k(\{1\}_{\ell_{2}+1})+R_N(\ell_{2},k)).
\end{align*}
Applying Lemma~\ref{ztsNested} to $S_{N;2}$ and adding $S_{N;1}$ directly leads to the result stated in Theorem~\ref{thmStirling}.

In order to show the asymptotic behaviour we require the asymptotic expansion of the harmonic numbers, for $n \to \infty$,
\[
H_{n}=\log n +\gamma
        +\frac{1}{2n}-\frac{1}{12n^{2}}+\mathcal{O}\Bigl(\frac{1}{n^{4}}\Bigr).
\]
First we obtain $H_{N-j} - H_{N-j+k_{j}}=\mathcal{O}(1/N)$. Moreover, $\zt_{N}(\{1\}_{\ell}) = \mathcal{O}\big(H_{N}^{\ell}\big) = \mathcal{O}(\ln^{\ell} N)$, which implies $R_N(\ell,k))=\mathcal{O}\Big(\frac{\ln^{\ell} N}{N}\big)$.
%\[
%S_{N;2}=(-1)^{r_2+1}\zts_{r_1}(\{1\}_{r_2-2},\ell+2) + E_{N}(r_1,r_2,\ell)
%=(-1)^{r_2+1}\zts_{r_1}(\{r_2-2,\ell+2\}) + (-1)^{r_2+1}\sum_{k=1}^{r_1}\binom{r_1}{k}\frac{(-1)^{k+1}}{k^{r_2-1}}R_N(\ell,k).
%\]
\end{proof}

\begin{proof}[Proof of Corollary~\ref{CoroStirling}]
For all positive integers $n,m$ the multiple zeta value $\zt(m+1,\{1\}_n)$ is 
a rational polynomial in the $\zt(i)$, as follows from \cite[Eq. (10)]{BBB} and extracting coefficients:
\begin{equation}
\label{SUMbor}
\sum_{m,n\ge 0}\zt(m+2,\{1\}_n)x^{m+1}y^{n+1}=1-\exp\biggl(\sum_{k\ge 2}\frac{x^k+y^k-(x+y)^k}{k}\zeta(k)\biggr).
\end{equation}
Thus Theorem \ref{thmStirling} implies the conclusion. We remark that an explicit expression for $\zt(m+1,\{1\}_{n-1})$, with $m,n\ge 1$, has been obtained recently by Kaneko and Sakata~\cite{KaSa}:
\[
\zt(m+1,\{1\}_{n-1})=\sum_{i=1}^{\min(m,n)}(-1)^{i-1}
\sum_{\substack{\text{wt}(\mathbf{m})=m,\text{wt}(\mathbf{n})=n 
\\\text{dep}(\mathbf{m})=\text{dep}(\mathbf{n})=i }}
\zt(\mathbf{m}+\mathbf{n}),
\]
where for two indices $\mathbf{m} = (m_1,\dots,m_i)$ and $\mathbf{n} =(n_1,\dots, n_i)$ with weights $\text{wt}(\mathbf{m}) = \sum_{j} m_{j} = m$ and $\text{wt}(\mathbf{n}) = \sum_{j} n_{j} = n$, respectively, which have the same depth $\text{dep}(\mathbf{m})=\text{dep}(\mathbf{n})=i$, the sum $\mathbf{m} + \mathbf{n}$ denotes $(m_1 + n_1,\dots, m_i + n_i)$.
\end{proof}

\subsection{Proofs for the zeta star series\label{sec:zeta_star}}
First we collect as Lemma~\ref{lemBr} a so-called duality result of Hoffman~\cite{H2004}, which generalizes Lemma~\ref{ztsNested}. See also the earlier work of Vermaseren~\cite{V} for an equivalent description in terms of an algorithm, Bradley~\cite{Br} for a $q$-analog, as well as Kawashima~\cite{Ka}. In the following we state Lemma~\ref{lemBr} and give a short and basic proof of it.

\begin{lem}[Nested truncated zeta star values - Duality]
\label{lemBr}
Let $A_{N}^{\ast}(a_1,\dots,a_{q})$ denote an alternating binomial sum of truncated multiple zeta star values:
\[
    A_{N}^{\ast}(a_1,\dots,a_{r})=\sum_{n=1}^{N}\binom{N}{n}\frac{(-1)^{a_1-1}}{n^{a_1}}\zeta^{\ast}_{n}(a_2,\dots,a_r).
\]
Then, for positive integers $N$, $r$, and $a_{i}, b_{i}$, $1 \le i \le r$, it holds
\begin{equation*}
A_{N}^{\ast}(a_1,\{1\}_{b_1-1},\cup_{i=2}^{r}\{a_i+1,\{1\}_{b_i-1}\})
= \zeta_{N}^{\ast}(\cup_{i=1}^{r-1}\{\{1\}_{a_i-1},b_i+1\},\{1\}_{a_r-1},b_r).
\end{equation*}
\end{lem}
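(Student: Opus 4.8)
The plan is to prove the duality identity by induction, using the single elementary summation step that already powers Lemma~\ref{ztsNested} as the engine, and peeling off one block $\{a_i+1,\{1\}_{b_i-1}\}$ of the index at a time. The key observation is that the operation
\[
(a_j)_{j} \;\longmapsto\; \Big(\sum_{j=1}^{n}\binom{n}{j}(-1)^{j-1}\frac{a_j}{j^{a}}\Big)_{n}
\]
iterated through an exponent $a$ produces, by the nested-sum expansion \eqref{BradleySimple}, a chain of $a$ summations $\sum_{i_1=1}^{n}\frac{1}{i_1}\cdots\sum_{i_a=1}^{i_{a-1}}\frac{1}{i_a}$ followed by a plain alternating binomial sum $\sum_{j=1}^{i_a}\binom{i_a}{j}(-1)^{j-1} a_j$. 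When $a_j = \zeta_j^{\ast}(\{1\}_{b-1},\dots)$, the plain alternating binomial sum collapses — exactly as in the second proof of Lemma~\ref{ztsNested}, via the telescoping identity $\sum_{j=h}^{i}\binom{i}{j}(-1)^{j-1} = (-1)^{h-1}\frac{h}{i}\binom{i}{h}$ — reducing the number of trailing $1$'s by one and moving the recursion down by a level. So one block $\{a_i+1,\{1\}_{b_i-1}\}$ on the $A^{\ast}$-side gets converted into a block $\{\{1\}_{a_i-1}, b_i+1\}$ on the $\zeta^{\ast}$-side, and the inner $\zeta^{\ast}_n(\cdots)$ that remains is again of the form to which the identity applies with $r$ decreased by one.

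First I would set up the induction on $r$. For the base case $r=1$ the claim is
\[
\sum_{n=1}^{N}\binom{N}{n}\frac{(-1)^{a_1-1}}{n^{a_1}}\zeta_n^{\ast}(\{1\}_{b_1-1}) = \zeta_N^{\ast}(\{1\}_{a_1-1}, b_1),
\]
which is precisely Lemma~\ref{ztsNested} with $(k,\ell)=(a_1, b_1-1)$ (reading $\zeta_N^{\ast}(\{1\}_{k-1},\ell+1)$ there), so nothing new is needed. For the inductive step, write the argument of $A_N^{\ast}$ as $a_1,\{1\}_{b_1-1}$ followed by the tail $\cup_{i=2}^{r}\{a_i+1,\{1\}_{b_i-1}\}$. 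The strategy is to apply \eqref{BradleySimple}-style iteration first through the exponent $a_1$ — producing $a_1-1$ outer summations $\sum \frac1{i_1}\cdots\sum\frac1{i_{a_1-1}}$ and a residual alternating binomial sum — and then to push that residual sum through the $b_1-1$ leading ones and the $a_2+1$ in the exponent of the next block, using at each "$1$" the telescoping collapse and at the exponent $a_2+1$ the full nested expansion. After these reductions the outermost structure reads $\sum\frac{1}{i_1}\cdots$ ($a_1-1$ times) $\frac{1}{i'}\sum\frac{1}{j_1}\cdots$ ($a_2$ times) applied to $A^{\ast}$ of a shorter tail, i.e.\ to an $(r-1)$-block instance; converting the prefix of summations into $\zeta^{\ast}$-notation via $\zeta_N^{\ast}(\{1\}_{m}) = \sum_{i_1}\frac1{i_1}\cdots$ and the definition of nested truncated stars then yields exactly $\zeta_N^{\ast}(\{1\}_{a_1-1}, b_1+1, \dots)$ with the inductive hypothesis supplying the rest.

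The bookkeeping of indices is where the real work lies, so I would be careful about two points. First, the boundary behaviour when some $b_i=1$ (no leading ones in a block, so the block is just $\{a_i+1\}$) and when some $a_i=1$ (so $\{1\}_{a_i-1}$ is empty and the exponent $a_i+1=2$ is applied directly): the nested-sum manipulation must be stated so that an empty chain of summations is the identity operator and the $k=0$ case of Lemma~\ref{ztsNested} (the $\frac{1}{n^\ell}$ alternative) is invoked where appropriate. Second, matching the claimed right-hand side $\zeta_N^{\ast}(\cup_{i=1}^{r-1}\{\{1\}_{a_i-1}, b_i+1\}, \{1\}_{a_r-1}, b_r)$: note the final block is $\{1\}_{a_r-1}, b_r$ rather than $b_r+1$, which is the signature of the innermost $\zeta^{\ast}$ having been reached with no further binomial sum to absorb — this is exactly the $r=1$ base case, so the induction closes consistently. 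I expect the main obstacle to be organizing the simultaneous reduction through "a block of ones followed by an exponent" cleanly enough that it visibly produces the concatenation pattern in one stroke, rather than getting lost tracking dummy variables; encapsulating the single reduction "push an alternating binomial sum past $\{1\}_{b-1}$ and then through exponent $a$" as an auxiliary sublemma (a mild generalization of \eqref{Iterating}) should make the induction transparent.
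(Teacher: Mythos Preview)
Your proposal is correct and is essentially the same argument the paper gives: induction on $r$ with base case Lemma~\ref{ztsNested}, the nested expansion \eqref{BradleySimple} to peel off the exponent $a_1$, then iterated use of the telescoping step \eqref{Iterating} to strip the $\{1\}_{b_1-1}$ and expose $A^{\ast}_{i_k}(a_2,\{1\}_{b_2-1},\dots)$. The paper packages this as a single recursion $A_N^{\ast}(w_{1,r}) = \sum_{i_1}\frac1{i_1}\cdots\sum_{i_{a_1}}\frac1{i_{a_1}^{b_1+1}} A_{i_{a_1}}^{\ast}(w_{2,r})$ rather than splitting the block into two sub-steps, but the content is identical.
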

\begin{proof}
In order to evaluate $A_{N}^{\ast}(a_1,\{1\}_{b_1-1},\cup_{i=2}^{r}\{a_i+1,\{1\}_{b_i-1})$ we use the strategy of the second proof of Lemma~\ref{ztsNested}. Let 
\[
w_{1,r}=(a_1,\{1\}_{b_1-1},\cup_{i=2}^{r}\{a_i+1,\{1\}_{b_i-1}), 
\]
and $w_{\ell,r}=(\cup_{i=\ell}^{r}\{a_i+1,\{1\}_{b_i-1})$, $2\le \ell\le r$.
Let $k=a_1$. By~\eqref{BradleySimple} we directly get
\begin{align}
A_{N}^{\ast}(w_{1,r}) & = A_{N}^{\ast}(a_{1},\{1\}_{b_{1}-1},w_{2,r}) = \sum_{n=1}^{N}\binom{N}{n}\frac{(-1)^{k-1}}{n^{k}}\zeta^{\ast}_{n}(\{1\}_{b_1-1},w_{2,r}) \notag\\
&=\sum_{i_1=1}^{N}\frac{1}{i_1}\dots \sum_{i_{k}=1}^{i_{k-1}}\frac1{i_{k}}\sum_{n=1}^{i_{k}}\binom{i_{k}}{n}(-1)^{n-1}\zeta^{\ast}_{n}(\{1\}_{b_1-1},w_{2,r}). \label{eqn:ANstar_w1r}
\end{align}
Now we change the order of summation and iterate, similar to equation~\eqref{Iterating}, to get
\begin{align}
\label{eqn:w2r_sum}
&\sum_{n=1}^{i_{k}}\binom{i_{k}}{n}(-1)^{n-1}\zeta^{\ast}_{n}(\{1\}_{b_1-1},w_{2,r})
=\frac{1}{i_k^{b_1-1}}\sum_{j=1}^{i_k}(-1)^{j-1}\binom{i_{k}}{j}\zeta^{\ast}_{j}(w_{2,r}).
\end{align}
If $r=1$, i.e., $w_{2,r}=\emptyset$, then the sum \eqref{eqn:w2r_sum} evaluates to $\frac{1}{i_{k}^{b_{1}-1}}$, and we reobtain Lemma~\ref{ztsNested}:
\begin{equation*}
  A_{N}^{\ast}(w_{1,1}) = \zts_{N}(\{1\}_{a_{1}-1},b_{1}).
\end{equation*}
If $r \ge 2$ then we proceed from \eqref{eqn:w2r_sum} by using $w_{2,r} = a_{2}+1,\{1\}_{b_{2}-1},w_{3,r}$ and again changing summation, which gives:
\begin{align*}
  \frac{1}{i_k^{b_1-1}}\sum_{j=1}^{i_k}(-1)^{j-1}\binom{i_{k}}{j}\zts_{j}(w_{2,r}) = \frac{1}{i_k^{b_1-1}}\sum_{\ell=1}^{i_k}(-1)^{\ell-1}\binom{i_{k}}{\ell}\cdot\frac{\ell}{i_k}\cdot\frac{\zts_{\ell}(\{1\}_{b_{2}-1},w_{3,r})}{\ell^{a_2+1}}.
\end{align*}
Plugging into \eqref{eqn:ANstar_w1r} this implies
\begin{align*}
A_{N}^{\ast}(w_{1,r}) & = A_{N}^{\ast}(a_{1},\{1\}_{b_{1}-1},w_{2,r}) \\
& = \sum_{i_1=1}^{N}\frac{1}{i_1}\dots\sum_{i_{k-1}=1}^{i_{k-2}}\frac1{i_{k-1}}\sum_{i_{k}=1}^{i_{k-1}}\frac1{i_{k}^{b_1+1}} A_{i_k}^{\ast}(a_2,\{1\}_{b_2-1},w_{3,r}).
\end{align*}
From the latter representation, induction with respect to the length $r$ of the argument directly gives the stated result.
\end{proof}

We also collect a well known conversion formula between zeta and zeta star values; see for example Ohno and Zudilin~\cite{Ohno}.
\begin{prop}
Multiple zeta values and multiple zeta star values, as well as the truncated versions, can be expressed
as follows:
\begin{equation}
\label{ztsZeta}
\begin{split}
  \zts(i_1,\dots,i_k) = \sum_{\circ = \text{``},\text{''} \text{or} \, \text{``}+\text{''}} \zt(i_1\circ i_2 \dots \circ i_k),\\
	\zts_{N}(i_1,\dots,i_k) = \sum_{\circ = \text{``},\text{''} \text{or} \, \text{``}+\text{''}} \zt_{N}(i_1\circ i_2 \dots \circ i_k),
\end{split}
\end{equation}
and
\begin{equation}
\label{ztZetaStar}
\begin{split}
  \zt(i_1,\dots,i_k) = \sum_{\circ = \text{``},\text{''} \text{or} \, \text{``}+\text{''}}(-1)^{\sigma_{+}} \zts(i_1\circ i_2 \dots \circ i_k),\\
	\zt_{N}(i_1,\dots,i_k) = \sum_{\circ = \text{``},\text{''} \text{or} \, \text{``}+\text{''}}(-1)^{\sigma_{+}} \zts_{N}(i_1\circ i_2 \dots \circ i_k),
\end{split}
\end{equation}
where $\sigma_{+}$ denotes the number of plus signs in $i_1\circ i_2 \dots \circ i_k$. The sums are taken over all sequences $(\circ_{1}, \dots, \circ_{k-1})$, with $\circ_{i} = \text{``},\text{''} \text{or} \, \text{``}+\text{''}$.
\end{prop}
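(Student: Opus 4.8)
The plan is to prove the two truncated identities \eqref{ztsZeta} and \eqref{ztZetaStar} directly, by decomposing the summation region, and then to obtain the non-truncated versions by letting $N\to\infty$, which is legitimate because the standing admissibility $i_1\ge 2$ forces absolute convergence of every term appearing on both sides.

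For the first pair of identities I would start from $\zts_N(i_1,\dots,i_k)=\sum_{N\ge n_1\ge\cdots\ge n_k\ge 1}n_1^{-i_1}\cdots n_k^{-i_k}$ and split the weakly decreasing region according to which of the $k-1$ relations $n_j\ge n_{j+1}$ is strict and which is an equality. A choice of signs $(\circ_1,\dots,\circ_{k-1})$, each a comma or a plus, is matched with the sub-region where $n_j>n_{j+1}$ at every comma position and $n_j=n_{j+1}$ at every plus position; on that sub-region a maximal block of indices joined only by plus-signs is carried by a single variable raised to the sum of those indices, so the corresponding part of the sum is exactly $\zt_N(i_1\circ_1 i_2\circ_2\cdots\circ_{k-1}i_k)$. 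Since the $2^{k-1}$ sub-regions tile the weakly decreasing region without overlap, summing over all sign sequences gives the truncated form of \eqref{ztsZeta}.

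For \eqref{ztZetaStar} I would run inclusion--exclusion inside the weakly decreasing region $W=\{N\ge n_1\ge\cdots\ge n_k\ge 1\}$: for $T\subseteq\{1,\dots,k-1\}$ let $A_T$ be the set of tuples in $W$ with $n_j=n_{j+1}$ for all $j\in T$, so that the weighted sum over $A_T$ equals $\zts_N$ of the composition obtained by adding the parts of $(i_1,\dots,i_k)$ across the bars indexed by $T$ --- in the notation of \eqref{ztsZeta}, the term whose plus-signs sit exactly at the positions of $T$, whence $\sigma_+=|T|$. Because the strictly decreasing region is $A_\emptyset$ with the sets $A_{\{1\}},\dots,A_{\{k-1\}}$ removed, inclusion--exclusion yields $\zt_N(i_1,\dots,i_k)=\sum_{T}(-1)^{|T|}\bigl(\text{weighted sum over }A_T\bigr)$, which is the truncated form of \eqref{ztZetaStar}; equivalently this is Möbius inversion of \eqref{ztsZeta} over the Boolean lattice of bar-subsets, whose Möbius function is $(-1)^{|T|}$. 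Then I would let $N\to\infty$: merging parts can only increase the first index, so every merged composition still has first part $\ge i_1\ge 2$, every multiple zeta and zeta star value in sight converges absolutely, $\lim_{N\to\infty}\zt_N=\zt$ and $\lim_{N\to\infty}\zts_N=\zts$ term by term, and the finite sums over sign sequences pass to the limit, giving the non-truncated identities.

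The computations here are routine; the two points that need care are the bookkeeping that identifies the inclusion--exclusion exponent $|T|$ with the number $\sigma_+$ of plus-signs in the matching term, and checking that every composition remains admissible after merging parts. Both are immediate once the correspondence ``plus-sign at position $j$'' $\leftrightarrow$ ``$j\in T$'' is fixed, so there is no real obstacle --- the only substantive work is setting up the region-splitting cleanly.
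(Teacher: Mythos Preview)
Your proof is correct. The region-splitting argument for \eqref{ztsZeta} is the standard one, and your inclusion--exclusion (equivalently, M\"obius inversion over the Boolean lattice of bar-positions) for \eqref{ztZetaStar} is clean and correct; the identification $\sigma_{+}=|T|$ and the admissibility check after merging are exactly the two points that need to be said, and you have said them.

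As for comparison with the paper: there is nothing to compare. The paper states this Proposition as a well-known conversion formula, cites Ohno and Zudilin, and gives no proof of its own. Your write-up therefore supplies what the paper omits; it is the natural direct argument and would be entirely acceptable as the proof.
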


Now we turn to the proof of Theorem~\ref{thmZetastar}. 
\begin{proof}[Proof of Theorem~\ref{thmZetastar}]
%Assume without loss of generality that $r_2\ge 1$ (otherwise, we can convert $\zts_{n-1}(\{1\}_{\ell_1})$ to sums up to $n$ split the factor $(n+1)$). 
Proceeding exactly as in the proof of Theorem~\ref{thmStirlingFinite}, we get
\[
S=r_1\sum_{k=1}^{r_1}(-1)^{k+1}\binom{r_1-1}{k-1}\sum_{n=1}^{\infty}\frac{\zts_{n-1}(\{1\}_{\ell_1})}{(n+k)n^{r_2}},
\]
and further, after carrying out a partial fraction decomposition, the representation $S = S_{1}^{\ast} + S_{2}^{\ast}$, where
\begin{align*}
S & = r_1\sum_{k=1}^{r_1}(-1)^{k+1}\binom{r_1-1}{k-1}
\bigg[\sum_{m=2}^{r_2}\frac{(-1)^{r_2-m}}{ k^{r_2+1-m}} \sum_{n=1}^{\infty} \frac{\zts_{n-1}(\{1\}_{\ell_1})}{n^m}\bigg]\\
& \quad \mbox{} + r_1\sum_{k=1}^{r_1}(-1)^{k+1}\binom{r_1-1}{k-1}\frac{(-1)^{r_2+1}}{k^{r_2}} \sum_{n=1}^{\infty}\zts_{n-1}(\{1\}_{\ell_1})\Big(\frac{1}{n}-\frac{1}{n+k}\Big).
\end{align*}
Since $\sum_{n \ge 1} \frac{\zts_{n-1}(\{1\}_{\ell})}{n^{m}}$ equals $\zts(m,\{1\}_{\ell})-\zts(m+1,\{1\}_{\ell-1})$, for $\ell \ge 1$, and $\zts(m)$, for $\ell=0$, an application of Lemma~\ref{ztsTrunc} yields the following evaluation of $S_{1}^{\ast}$:
\[
S_{1}^{\ast} = 
\begin{cases}
  \sum_{m=2}^{r_2}(-1)^{r_2-m}\big(\zts(m,\{1\}_{\ell_1})-\zts(m+1,\{1\}_{\ell_1-1})\big)\zts_{r_1}(\{1\}_{r_2-m}), & \quad \text{for $\ell_{1} \ge 1$},\\
	\sum_{m=2}^{r_{2}} (-1)^{r_{2}-m} \zts(m) \zts_{r_{1}}(\{1\}_{r_{2}-m}), & \quad \text{for $\ell_{1}=0$}.
\end{cases}
\]
We turn to an evaluation of $S_{2}^{\ast}$. In the following we write $\ell=\ell_1$ for the sake of simplicity and define 
\[
U^{\ast}(\ell,k) := \sum_{n=1}^{\infty}\zts_{n-1}(\{1\}_{\ell})\Big(\frac{1}{n}-\frac{1}{n+k}\Big), \quad \text{and} \quad T^{\ast}(\ell,k) := \sum_{n=1}^{\infty}\zts_{n}(\{1\}_{\ell})\Big(\frac{1}{n}-\frac{1}{n+k}\Big).
\]
Of course, for $\ell=0$, it holds 
\begin{equation*}
  U^{\ast}(0,k) = T^{\ast}(0,k) = H_{k} = \zt_{k}(1).
\end{equation*}
To get a relation between both series, for $\ell \ge 1$, we write $\zts_{n-1}(\{1\}_{\ell}) = \zts_{n}(\{1\}_{\ell}) - \frac{1}{n} \zts_{n}(\{1\}_{\ell-1})$. After using $\frac{1}{n} \big(\frac{1}{n} - \frac{1}{n+k}\big) = \frac{1}{n^{2}} - \frac{1}{k} \big(\frac{1}{n} - \frac{1}{n+k}\big)$, this simplifies to
\begin{equation*}
  U^{\ast}(\ell,k) = T^{\ast}(\ell,k) + \frac{1}{k} T^{\ast}(\ell-1,k) - \zts(2,\{1\}_{\ell-1}), \quad \text{for $\ell \ge 1$}.
\end{equation*}
Thus, we obtain the following evaluation of the series $S_{2}^{\ast}$ in terms of $T^{\ast}(\ell,k)$:
\begin{equation*}
  S_{2}^{\ast} = 
	\begin{cases}
	  (-1)^{r_2+1}\sum_{k=1}^{r_1}\binom{r_1}{k}\frac{(-1)^{k+1}}{k^{r_2-1}}\Big[T^{\ast}(\ell_{1},k)+\frac1kT^{\ast}(\ell_{1}-1,k)-\zts(2,\{1\}_{\ell_{1}-1})\Big], & \quad \text{for $\ell_{1} \ge 1$},\\
		(-1)^{r_2+1}\sum_{k=1}^{r_1}\binom{r_1}{k}\frac{(-1)^{k+1}}{k^{r_2-1}} \zt_{k}(1), & \quad \text{for $\ell_{1} = 0$}.
	\end{cases}
\end{equation*}

It remains to evaluate $T^{\ast}(\ell,k)$. To this aim we establish for $\ell\ge 1$ a recurrence relation:
\begin{align*}
T^{\ast}(\ell,k)&=\sum_{n=1}^{\infty}\zts_{n}(\{1\}_\ell)\Big(\frac{1}{n}-\frac{1}{n+k}\Big)
=\sum_{j=1}^{\infty}\frac{\zts_{j}(\{1\}_{\ell-1})}j \sum_{n=j}^{\infty} \Big(\frac{1}{n}-\frac{1}{n+k}\Big)\\
&=\sum_{j=1}^{\infty}\frac{\zts_{j}(\{1\}_{\ell-1})}j\Big(H_{j+k-1}-H_{j-1}\Big)=\sum_{j=1}^{\infty}\frac{\zts_{j}(\{1\}_{\ell-1})}j\Big(\sum_{i=0}^{k-1}\frac1{j+i}\Big)\\
&=\sum_{i=1}^{k-1}\frac1i \sum_{j=1}^{\infty}\zeta_{j}^{\ast}(\{1\}_{\ell-1})\Big(\frac1j-\frac1{j+i}\Big)+\zts(2,\{1\}_{\ell-1})
=\sum_{i=1}^{k-1}\frac1i T^{\ast}(\ell-1,i) + t_{\ell},
\end{align*}
with toll function $t_{\ell} = \zts(2,\{1\}_{\ell-1})$, for $\ell \ge 1$.
Iterating this recurrence and taking into account the initial value $T^{\ast}(0,k)=\zeta_{k}(1)$ yields, for $\ell\ge 1$:
\begin{align*}
  T^{\ast}(\ell,k) & = \sum_{j=0}^{\ell-1}\zts(2,\{1\}_{\ell-1-j})\zt_{k-1}(\{1\}_j)+\zt_{k-1}(\{1\}_{\ell+1})+\zt_{k-1}(\{1\}_{\ell-1},2),
\end{align*}
which finishes the proof of the theorem.
\end{proof}

As stated in Remark~\ref{rem:zeta_star}, starting with this representation of the series $S$, it can be evaluated further in terms of truncated multiple zeta star values. Namely, in order to evaluate the finite sums of the type (which occur due to the evaluation of $T^{\ast}(\ell,k)$)
\[
\sum_{k=1}^{r_1}\binom{r_1}{k}\frac{(-1)^{k+1}}{k^{r_2-1}}\zt_{k-1}(a_1,\dots,a_j),
\]
we first convert the truncated zeta values into truncated zeta star values using~\eqref{ztZetaStar}. 
Then, Lemma~\ref{lemBr}, i.e., Hoffman's duality for binomial sums and truncated zeta star values, leads to such an evaluation.

\smallskip

We further remark that an alternative way to get evaluations of $S=S(\ell_{1},0,r_{1},r_{2})$ could be given by starting from \eqref{eqn:sumS} and converting $\zts_{n-1}(\{1\}_{\ell_{1}})$ into ordinary truncated multiple zeta values using \eqref{ztsZeta}.
Then it remains to evaluate general sums of the form 
\begin{equation}
\label{ProKuGeneral}
\sum_{n\ge 1}\frac{\zt_{n-1}(a_1,\dots,a_\ell)}{\binom{n+r_1}{r_1}n^{r_2}}. 
\end{equation}
But this has been carried out explicitly already in~\cite{PK2010}, crucially relying on the result of Hoffman, Lemma~\ref{lemBr}, stated before. However, due to \eqref{ztsZeta}, this leads to a somewhat non-explicit evaluation of $S$. 
Of course, for concrete values of $\ell_{1}$ the evaluation can always be carried out.

\medskip 

\begin{proof}[Proof of Corollary~\ref{corollZTS}]
By a result of Granville~\cite{G} we have
\[
  \zts(2,\{1\}_{\ell-2}) = (\ell-1)\zt(\ell), \quad \text{for $\ell \ge 2$}.
\]
Thus, we can convert all occurrences of zeta star values of this form into ordinary zeta values.
Furthermore, for $r_2=2$ and $\ell_1=2$ we can use 
\[
\zts(3,1)=\frac14\Big(\zts(2,1,1)+2\zt(4)\Big)=\frac14\Big(2\zt(3)+2\zt(4)\Big)
\]
in order to evaluate all occurrences of zeta star values into ordinary zeta values.
\end{proof}

\subsection{Proofs concerning the Arakawa-Kaneko zeta function}

In order to prove Theorem~\ref{thmArakawa} we collect a result of~\cite{K}. 
\begin{lem}[Arakawa-Kaneko zeta function with postive integer arguments]
\label{lemArakawa}
For arbitrary $i_{1},\dots,i_{k},s\in\N$ the function $\xi_{i_{1},\dots,i_{k}}(s)$ is given by
\begin{equation*}
\xi_{i_{1},\dots,i_{k}}(s)=\sum_{n \ge 1}\frac{\zeta_{n}^{\ast}(\{1\}_{s-1}) \, \zeta_{n-1}(i_{2},\dots,i_{k})}{n^{i_{1}+1}}.
\end{equation*}
\end{lem}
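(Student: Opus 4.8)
The plan is to unfold the integral representation defining $\xi_{i_{1},\dots,i_{k}}(s)$, insert the defining series of the multiple polylogarithm, and interchange summation and integration. For $t>0$ one has $0<1-e^{-t}<1$, so $\Li_{i_{1},\dots,i_{k}}(1-e^{-t})=\sum_{n_{1}>n_{2}>\dots>n_{k}\ge 1}\frac{(1-e^{-t})^{n_{1}}}{n_{1}^{i_{1}}\cdots n_{k}^{i_{k}}}$ converges absolutely there (so no admissibility restriction on $i_{1}$ is needed and the argument also covers $i_{1}=1$), and formally
\[
\xi_{i_{1},\dots,i_{k}}(s)=\sum_{n_{1}>\dots>n_{k}\ge 1}\frac{1}{n_{1}^{i_{1}}\cdots n_{k}^{i_{k}}}\cdot\frac{1}{\Gamma(s)}\int_{0}^{\infty}\frac{t^{s-1}(1-e^{-t})^{n_{1}}}{e^{t}-1}\,dt.
\]

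Next I would evaluate the inner integral in closed form. Using $\frac{1}{e^{t}-1}=\frac{e^{-t}}{1-e^{-t}}$ gives $\frac{(1-e^{-t})^{n_{1}}}{e^{t}-1}=e^{-t}(1-e^{-t})^{n_{1}-1}$; expanding $(1-e^{-t})^{n_{1}-1}$ by the binomial theorem and integrating term by term via $\int_{0}^{\infty}t^{s-1}e^{-(j+1)t}\,dt=\Gamma(s)/(j+1)^{s}$ turns the inner integral into $\sum_{j=0}^{n_{1}-1}\binom{n_{1}-1}{j}\frac{(-1)^{j}}{(j+1)^{s}}=\sum_{j=1}^{n_{1}}\binom{n_{1}-1}{j-1}\frac{(-1)^{j-1}}{j^{s}}$. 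The elementary identity $\binom{n_{1}-1}{j-1}=\frac{j}{n_{1}}\binom{n_{1}}{j}$ rewrites this as $\frac{1}{n_{1}}\sum_{j=1}^{n_{1}}\binom{n_{1}}{j}\frac{(-1)^{j-1}}{j^{s-1}}$, which by the Dilcher-type formula $\zts_{n}(\{1\}_{k})=\sum_{j=1}^{n}\binom{n}{j}\frac{(-1)^{j-1}}{j^{k}}$ of Lemma~\ref{ztsTrunc} equals $\frac{1}{n_{1}}\zts_{n_{1}}(\{1\}_{s-1})$. Substituting back and then collecting the summands with a fixed value $n_{1}=n$ — for which the remaining sum over $n>n_{2}>\dots>n_{k}\ge 1$ of $\frac{1}{n_{2}^{i_{2}}\cdots n_{k}^{i_{k}}}$ is exactly $\zeta_{n-1}(i_{2},\dots,i_{k})$ — yields the claimed identity.

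The only step that needs more than a one-line computation is the justification of the term-by-term integration. Since $s\in\N$ the integrand is nonnegative, so by monotone convergence (Tonelli) it suffices to verify that the resulting double series converges; using $\zts_{n}(\{1\}_{s-1})=\mathcal{O}(\log^{s-1}n)$ and $\zeta_{n-1}(i_{2},\dots,i_{k})\le\zeta_{n-1}(\{1\}_{k-1})=\mathcal{O}(\log^{k-1}n)$, the series is dominated by $\mathcal{O}\bigl(\sum_{n\ge 1}\frac{\log^{s+k-2}n}{n^{i_{1}+1}}\bigr)$, which converges because $i_{1}+1\ge 2$. Everything else is the two binomial manipulations above together with Lemma~\ref{ztsTrunc}, so I expect this convergence bookkeeping to be the main (and essentially only) obstacle.
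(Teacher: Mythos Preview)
Your proof is correct and follows essentially the same route as the paper: both expand $\Li_{i_{1},\dots,i_{k}}(1-e^{-t})$ as a series, rewrite $\frac{(1-e^{-t})^{n}}{e^{t}-1}=e^{-t}(1-e^{-t})^{n-1}$, evaluate the resulting integral via the binomial expansion, and then invoke Lemma~\ref{ztsTrunc} to identify the alternating binomial sum as $\frac{1}{n}\zts_{n}(\{1\}_{s-1})$ (the paper packages this last computation by referring back to Remark~\ref{remPWschlkt}). Your version adds an explicit Tonelli-based justification for the interchange of sum and integral, which the paper leaves implicit.
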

\begin{remark}
The special case $k=1$ leads to $\xi_{r}(s) = \zts(r+1,\{1\}_{s-1})$ and was originally obtained by Ohno~\cite{O} using a generalization
of the sum formula for multiple zeta values (see \cite{O} and references therein). However, as mentioned already in this paper, due to a comment by Zagier a simple direct derivation of this result is possible.
\end{remark}

For the sake of completeness we present a short proof of Lemma~\ref{lemArakawa}, essentially by applying Lemma~\ref{ztsTrunc}.
\begin{proof}
We note that by definition 
\[
\Li_{i_1,\dots,i_k}(z)=\sum_{n_1>n_{2}>\dots>n_k\ge 1}\frac{z^{n_1}}{n_1^{i_1}n_2^{i_2}\dots n_{k}^{i_k}}
= \sum_{n=1}^{\infty}\frac{z^n \zt_{n-1}(i_2,\dots,i_k)}{n^{i_1}}.
\]
Thus,
\begin{align*}
\xi_{i_1,\dots,i_k}(s)&=\sum_{n=1}^{\infty}
\frac{\zt_{n-1}(i_2,\dots,i_k)}{n^{i_1}\Gamma(s)}
\int_{0}^{\infty}t^{s-1}e^{-t}(1-e^{-t})^{n-1}dt.
\end{align*}
We already encountered the last integral expression in Remark~\ref{remPWschlkt} when discussing the maximum of standard exponentially distribution random variables, where we obtained, for $s\in\N$, by an application of Lemma~\ref{ztsTrunc}:
\[
\int_{0}^{\infty}t^{s-1}e^{-t}(1-e^{-t})^{n-1}dt
=\frac{1}{n} \E(Z_n^{s-1}) = \frac{(s-1)!}{n}\zts_{n}(\{1\}_{s-1}).
\]
This leads to the stated result.
\end{proof}

\begin{proof}[Proof of Theorem~\ref{thmArakawa}]
We use the recurrence relation for the truncated multiple zeta star values given in Lemma~\ref{ztsTrunc} to get
\[
\zeta_{n_1-1}^{\ast}(\{1\}_{\ell}) = \zeta_{n_1}^{\ast}(\{1\}_{\ell})-\frac1n \zeta_{n_1}^{\ast}(\{1\}_{\ell-1}).
\]
Thus, plugging this relation into the definition of the series $S$ gives, for $r_{1} = 0$, due to Lemma~\ref{lemArakawa} the stated result:
\[
  S(\ell_{1}, \ell_{2}, 0, r_{2}) = \xi_{r_2-1,\{1\}_{\ell_2}}(\ell_1+1) - \xi_{r_2,\{1\}_{\ell_2}}(\ell_1).
\]
On the other hand, combining Lemma~\ref{lemArakawa} with Lemmata~\ref{stirmzv} and~\ref{ztsTrunc} directly leads
to the representation of $\xi_{i_{1},\{1\}_{r-1}}(s)$ involving the polynomials $P_k$ and $Q_k$.
\end{proof}

\section{Outlook and open problems}
\subsection{Evaluation of the general series}
Concerning the general case of evaluating the series $S=S(\ell_{1}, \ell_{2}, r_{1}, r_{2})$ defined in \eqref{eqn:sumS} we may use the proof methods when evaluating the Stirling series and zeta star series as done in Section~\ref{sec:ProofResults}. Proceeding exactly as in the proof of Theorem~\ref{thmStirlingFinite}~\&~\ref{thmZetastar} we get
\[
S=r_1\sum_{k=1}^{r_1}(-1)^{k+1}\binom{r_1-1}{k-1}\sum_{n=1}^{\infty}\frac{\zts_{n-1}(\{1\}_{\ell_1}) \zt_{n-1}(\{1\}_{\ell_2})}{(n+k)n^{r_2}}
\]
as well as the representation $S=S_1+S_2$, via
\begin{align*}
S&=r_1\sum_{k=1}^{r_1}(-1)^{k+1}\binom{r_1-1}{k-1}
\bigg[\sum_{m=2}^{r_2}\frac{(-1)^{r_2-m}}{ k^{r_2+1-m}} \sum_{n=1}^{\infty} \frac{\zts_{n-1}(\{1\}_{\ell_1}) \zt_{n-1}(\{1\}_{\ell_2})}{n^m}\bigg]\\
&+r_1\sum_{k=1}^{r_1}(-1)^{k+1}\binom{r_1-1}{k-1}\frac{(-1)^{r_2+1}}{k^{r_2}} \sum_{n=1}^{\infty}\zts_{n-1}(\{1\}_{\ell_1}) \zt_{n-1}(\{1\}_{\ell_2})\Big(\frac{1}{n}-\frac{1}{n+k}\Big).
\end{align*}
The first series is readily reduced to Arakawa-Kaneko zeta functions by an application of Theorem~\ref{thmArakawa} and Lemma~\ref{ztsTrunc}, which yields
	\[
	S_1=\sum_{m=2}^{r_2}(-1)^{r_2-m}\big(\xi_{m-1,\{1\}_{\ell_2}}(\ell_1+1) - \xi_{m,\{1\}_{\ell_2}}(\ell_1)\big)\zts_{r_1}(\{1\}_{r_2-m}). 
	\]
However, for $S_{2}$ we need to evaluate the series
\begin{equation}\label{eqn:zts_zs_comb}
  \sum_{n=1}^{\infty}\zts_{n-1}(\{1\}_{\ell_1}) \zt_{n-1}(\{1\}_{\ell_2})\Big(\frac{1}{n}-\frac{1}{n+k}\Big).
\end{equation}
For concrete small values of $r_1$ and thus of $k$ this can be done by series manipulation. For symbolic $k$ we may proceed as follows. According to~\eqref{ztsZeta} we can write truncated multiple zeta star values as the sum of truncated multiple zeta values: 
\[
\zts_{n-1}(\{1\}_{\ell_1}) \zt_{n-1}(\{1\}_{\ell_2})
= \sum_{\circ = \text{``},\text{''} \text{or} \, \text{``}+\text{''}} \zt_{n-1}(\{1\}_{\ell_{2}}) \zt_{n-1}(\underbrace{1\circ 1 \dots \circ 1}_{\ell_{1}}).
\]
The so-called stuffle identity for (truncated) multiple zeta values, see, e.g., \cite{H1997}, allows to express products
of (truncated) multiple zeta values 
\[
\zt_N(i_1,\dots,i_k)\zt_N(j_1,\dots,j_\ell)
\]
as the sum of (truncated) multiple zeta values of weight $\sum_{r=1}^{k}i_{r} + \sum_{r=1}^{\ell}j_{r}$ and maximal depth $k+\ell$.
As an example, 
\[
\zt_N(i_1,i_2)\zt(j) = \zt_N(i_1,i_2,j)+\zt_N(i_1,i_2+j)+\zt_N(i_1,j,i_2)+\zt_N(i_1+j,i_2)+\zt_N(j,i_1,i_2).
\]
Using this stuffle identity we obtain from above expression:
\begin{equation}
\label{combination}
\zts_{n-1}(\{1\}_{\ell_1}) \zt_{n-1}(\{1\}_{\ell_2})
= \sum_{\circ = \text{``},\text{''} \text{or} \, \text{``}+\text{''}}\sum_{\mathbf{a}\in\text{stuffle}\big(\{1\}_{\ell_{2}},\underbrace{1\circ 1 \dots \circ 1}_{\ell_{1}}\big)}\zt_{n-1}(\mathbf{a});
\end{equation}
thus $\mathbf{a}=(a_{1},\dots,a_{q})$, for some $q\in\N$.
Hence, an evaluation of the series \eqref{eqn:zts_zs_comb} and thus of $S$ can be reduced to evaluations of series of the form
\[
\sum_{n=1}^{\infty}\zt_{n-1}(a_{1},\dots,a_{q})\Big(\frac{1}{n}-\frac{1}{n+k}\Big),
\]
which have been treated already in~\cite{PK2010}. Altogether, apart from small concrete values of $\ell_1$ and $\ell_2$, this leads to a rather involved expression for the series $S$.

\smallskip 

We also mention a different approach yielding an evaluation of $S$: plugging~\eqref{combination} directly into the definition~\eqref{eqn:sumS} gives 
\[
S=\sum_{\circ = \text{``},\text{''} \text{or} \, \text{``}+\text{''}}\sum_{\mathbf{a}\in\text{stuffle}\big(\{1\}_{\ell_{2}},\underbrace{1\circ 1 \dots \circ 1\big)}_{\ell_{1}}}
\bigg(\sum_{n\ge 1}\frac{\zt_{n-1}(\mathbf{a})}{\binom{n+r_1}{r_1}n^{r_2}}\bigg).
\]
As mentioned before, a treatment of the innermost series~\eqref{ProKuGeneral} has been given already in~\cite{PK2010}; using these evaluations leads to another quite involved expression for $S$, but avoiding Arakawa-Kaneko zeta functions.
We leave as an open problem a general more compact evaluation of the series $S$.

\subsection{Generalized series}
The chosen approach relying on partial fraction decompositions, Lemma~\ref{LemmaKnuth} and \eqref{parfrac}, also applies to series of the form
\begin{equation}
\label{eqn:S_general_k}
\sum_{n\ge 1}\frac{\zts_{n-1}(\{1\}_{\ell_1})\zt_{n-1}(\{1\}_{\ell_2})}{\binom{n+r_1}{r_1}^{k}n^{r_2}},
\end{equation}
for $k\in\N$, generalizing the case $k=1$.
Moreover, a similar approach is applicable to series
\begin{equation}
\sum_{n\ge 1}\frac{\zts_{n-1}(\{1\}_{\ell_1})\zt_{n-1}(\{1\}_{\ell_2})}{\binom{n+r_1}{r_1}\binom{n+r_2}{r_2}n^{r_3}},
\end{equation}
and even to more general products of binomial coefficients. However, it seems that the general case $k\ge 2$ of \eqref{eqn:S_general_k}, even for $\ell_1=0$ or $\ell_2=0$, also involves multiple Hurwitz zeta functions. We leave as an open problem a more compact evaluation of such series.

\subsection{Truncated interpolated multiple zeta values}
We have observed before in Lemmata~\ref{stirmzv} and~\ref{ztsTrunc} that both truncated series, $\zt_{n}(\{1\}_k)$ via the Stirling numbers of the first kind, but also $\zts_{n}(\{1\}_k)$, compare with Remark~\ref{remPWschlkt}, occur in a multitude of different places. 
For non-truncated series $\zt$ and $\zts$, Yamamoto~\cite{Y} introduced a generalization of both versions called interpolated multiple zeta values. 
Namely, according to \eqref{ztsZeta} it holds
\[
\zts(i_1,\dots,i_k)=\sum_{\circ = \text{``},\text{''} \text{or} \, \text{``}+\text{''}}\zt(i_1\circ i_2 \dots \circ i_k).
\]
If we denote by $\sigma_{+}$ the number of plus in the expression $i_1\circ i_2 \dots \circ i_k$, then Yamamoto defines
\begin{equation*}
  \ztt(i_1,\dots,i_k) = \sum_{\circ = \text{``},\text{''} \text{or} \, \text{``}+\text{''}}t^{\sigma_{+}}\zt(i_1\circ i_2 \dots \circ i_k).
\end{equation*}
Thus, the series $\ztt(i_1,\dots,i_k)$ interpolates between multiple zeta values, case $t=0$, and multiple zeta star values, case $t=1$. It turned out that the interpolated series satisfies many identities generalizing or unifying earlier result for multiple zeta and zeta star values.

We are interested in the corresponding truncated series
\begin{align*}
\ztt_{n}(\{1\}_{k}) & =\sum_{\circ = \text{``},\text{''} \text{or} \, \text{``}+\text{''}}t^{\sigma_{+}}\zt_{n}(\underbrace{1 \circ 1 \circ \dots \circ 1}_{k}) = \sum_{\mathbf{p}\in P_O(k)}t^{k-\ell(\mathbf{p})}\zt_{n}(\mathbf{p})
\end{align*}
and their properties, since they interpolate between $\zt_n(\{1\}_k)$ and $\zts_n(\{1\}_k)$, $t=0$ and $t=1$, respectively. 
Here $P_O(k)$ denotes the set of ordered partitions of the integer $k$ and $\ell(\mathbf{p})$ the length of a partition $\mathbf{p}$, defined as the number of its summands. We are looking for interesting (combinatorial) properties of $\ztt_{n}(\{1\}_{k})$ and a possible unification of Lemmata~\ref{stirmzv} and~\ref{ztsTrunc}. A representation in terms of generalized harmonic numbers $H_{n}^{(s)}$ is certainly possible for concrete values of $k$ by a repeated usage of the aforementioned stuffle formulas. 
Below we collect such formulas for the cases $1\le k\le 3$, which are valid for all $t$. 
\begin{example}
For $k=1$ we have 
\[
\ztt_n(\{1\}_1)=\zt_n(1)=\zts(1)=H_n.
\]
For $k=2$, we have
\[
\ztt_n(\{1\}_2)=\frac12\Big[ H_{n}^2+(2t-1)\Ha{n}{2}\Big]
=\frac1{2!}
\left|
\begin{matrix}
H_n & (1-2t)\\
\Ha{n}{2} & H_n
\end{matrix}
\right|.
\]
For $k=3$ we have
\begin{align*}
\ztt_n(\{1\}_3)&=\frac16\Big[H_n^3+(6t-3)H_n\Ha{n}{2}+(6t^2-6t+2)\Ha{n}3\Big]\\
&=\frac1{3!}
\left|
\begin{matrix}
H_n & c_{3,1} &0\\
\Ha{n}{2} & H_n & c_{3,2}\\
\Ha{n}{3} & \Ha{n}{2} & H_n\\
\end{matrix}
\right|,
\end{align*}
with 
\[
c_{3;1}(t), c_{3;2}(t) = -\frac{6t-3}2 \pm\frac{\sqrt{12t^2-12t+1}}{2}. 
\]
\end{example}

We also collect for $t=\frac12$ and $1\le k\le 4$ the specific determinants.
\begin{example}
For $t=\frac12$ and $k=2$ we get
\[
\zt^{\frac12}_n(\{1\}_2)=\frac12 H_{n}^2=
\frac1{2!}
\left|
\begin{matrix}
H_n & 0\\
\Ha{n}{2} & H_n
\end{matrix}
\right|,
\]
and for $k=3$
\[
\zt^{\frac12}_n(\{1\}_3)=\frac16\Big[H_n^3+\frac12\Ha{n}3\Big]
=\frac1{3!}
\left|
\begin{matrix}
H_n & \frac{i}{\sqrt{2}} &0\\
\Ha{n}{2} & H_n & -\frac{i}{\sqrt{2}}\\
\Ha{n}{3} & \Ha{n}{2} & H_n\\
\end{matrix}
\right|,
\]
where $i$ denotes the imaginary unit. For $k=4$ we get
\[
\zt^{\frac12}_n(\{1\}_4)=\frac1{24}\Big[H_n^4+2H_n\Ha{n}3\Big]
=\frac1{4!}
\left|
\begin{matrix}
H_n & 0&0&0\\
\Ha{n}{2} & H_n & i\sqrt{2}&0\\
\Ha{n}{3} & \Ha{n}{2} & H_n&-i\sqrt{2}\\
\Ha{n}{4} & \Ha{n}{3} & \Ha{n}{2}&H_n
\end{matrix}
\right|.
\]
\end{example}

We make the observation, that for $\ztt_{n}(\{1\}_{k})$ no direct generalization of the determinantal representations of $\zt_{n}(\{1\}_{k})$ and $\zts_{n}(\{1\}_{k})$ given in Lemma~\ref{stirmzv} \& \ref{ztsTrunc}, respectively, exist, except for the known cases $t=0$ and $t=1$.
\begin{prop}
For the truncated interpolated multiple zeta values $\ztt_n(\{1\}_k)$ a general determinant of the form
\[
\frac1{k!}\left|
\begin{matrix}
\Ha{n}1 & c_{k,1} & 0 & \dots &0\\
\Ha{n}2 & \Ha{n}1 & c_{k,2} & \dots &0\\
\hdots & \hdots & \hdots & \ddots &\vdots\\
\Ha{n}{k-1} & \Ha{n}{k-2} & \Ha{n}{k-3} & \dots &c_{k,k-1}\\
\Ha{n}{k} & \Ha{n}{k-1} & \Ha{n}{k-2} & \dots &\Ha{n}1\\
\end{matrix}
\right|,
\]
valid for all $k\ge 1$, only exists for the ordinary multiple zeta values $t=0$ and the multiple zeta star values $t=1$.
\end{prop}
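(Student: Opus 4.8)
The plan is to treat both $\ztt_n(\{1\}_k)$ and the candidate expression $\tfrac1{k!}D_k$, where $D_k$ is the $k\times k$ determinant in the statement, as polynomials in the formally independent symbols $\Ha n1,\dots,\Ha nk$, and to compare the coefficients of a few well-chosen monomials; for $t\notin\{0,1\}$ these comparisons turn out to be incompatible already for small $k$. First I would record the generating function
\[
\sum_{k\ge 0}\ztt_n(\{1\}_k)\,q^k=\prod_{m=1}^n\frac{m+(1-t)q}{m-tq}=\exp\Big(\sum_{j\ge1}\alpha_j(t)\,\Ha nj\,q^j\Big),\qquad \alpha_j(t)=\frac{t^j-(t-1)^j}{j},
\]
the first equality following from the definition $\ztt_n(\{1\}_k)=\sum_{\mathbf p\in P_O(k)}t^{k-\ell(\mathbf p)}\zt_n(\mathbf p)$ and the identity $\sum_{r\ge0}\sum_{n\ge m_1>\dots>m_r\ge1}\prod_s f(m_s)=\prod_{m=1}^n(1+f(m))$ with $f(m)=\frac{q/m}{1-tq/m}$, the second from $\exp$--$\log$. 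The features I use are $\alpha_1(t)\equiv1$, $\deg_t\alpha_j=j-1$, and $\alpha_j(\tfrac12)=0$ for even $j$; extracting coefficients from the exponential, the coefficient of $\Ha nj(\Ha n1)^{k-j}$ in $\ztt_n(\{1\}_k)$ equals $\alpha_j(t)/(k-j)!$ and that of $(\Ha n2)^2(\Ha n1)^{k-4}$ equals $\alpha_2(t)^2/(2\,(k-4)!)$.

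Second I would expand $D_k$ combinatorially: repeated expansion along the first column gives, with $c_i:=c_{k,i}$,
\[
D_k=\sum_{(\lambda_1,\dots,\lambda_r)}(-1)^{k-r}\Big(\prod_{j=1}^r\Ha n{\lambda_j}\Big)\prod_{\substack{1\le i\le k-1\\ i\notin\{\lambda_1,\ \lambda_1+\lambda_2,\ \dots,\ \lambda_1+\dots+\lambda_{r-1}\}}}c_i,
\]
the outer sum over all compositions $(\lambda_1,\dots,\lambda_r)$ of $k$ and the inner product over the ``non-breakpoint'' positions. Reading off coefficients in $D_k$ one finds $-e_1(c)$ for $\Ha n2(\Ha n1)^{k-2}$, $\sum_{j=1}^{k-2}c_jc_{j+1}$ for $\Ha n3(\Ha n1)^{k-3}$, $e_2(c)-\sum_{j=1}^{k-2}c_jc_{j+1}$ for $(\Ha n2)^2(\Ha n1)^{k-4}$, $-\sum_{j=1}^{k-3}c_jc_{j+1}c_{j+2}$ for $\Ha n4(\Ha n1)^{k-4}$, and $(-1)^{k-1}c_1c_2\cdots c_{k-1}$ for $\Ha nk$, where $e_1,e_2$ denote the elementary symmetric functions of $c_1,\dots,c_{k-1}$. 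For $t=0$ the choice $c_{k,j}=j$ recovers the determinant of Lemma~\ref{stirmzv}, and for $t=1$ the choice $c_{k,j}=-j$ recovers that of Lemma~\ref{ztsTrunc}, so both values work for every $k$.

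For $t\notin\{0,1\}$ I would argue in two steps. At $k=4$, equating the above coefficients for $(\Ha n1)^4,\ \Ha n2(\Ha n1)^2,\ \Ha n1\Ha n3,\ (\Ha n2)^2,\ \Ha n4$ determines $c_{4,1}+c_{4,2}+c_{4,3}$, $c_{4,1}c_{4,2}+c_{4,2}c_{4,3}$, $c_{4,1}c_{4,3}$ and $c_{4,1}c_{4,2}c_{4,3}$ as explicit polynomials in $t$ (through $\alpha_2,\alpha_3,\alpha_4$); eliminating the three unknowns leaves a polynomial relation in $t$ which, after the substitution $u=t^2-t$, reduces to $u^2=0$, while the two degeneracies $c_{4,1}c_{4,3}=0$ and $c_{4,2}=0$ add, on a short case check, only the further value $t=\tfrac12$. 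So a $k=4$ determinant exists only for $t\in\{0,1,\tfrac12\}$, and $t=\tfrac12$ is then excluded at $k=5$: since $\zt^{1/2}_n(\{1\}_5)$ involves no $\Ha n2$ and no $\Ha n4$, the coefficients of $\Ha n2(\Ha n1)^3$, $\Ha n4\Ha n1$ and $(\Ha n2)^2\Ha n1$ in $D_5$ must vanish; using $c_{5,1}c_{5,2}c_{5,3}c_{5,4}=5!\,\alpha_5(\tfrac12)\neq0$ (so no $c_{5,j}=0$), vanishing of the first two forces $c_{5,1}+c_{5,2}+c_{5,3}+c_{5,4}=0$, $c_{5,4}=-c_{5,1}$, $c_{5,3}=-c_{5,2}$, and vanishing of the third forces $c_{5,1}=-2c_{5,2}$; substituting into the surviving equations $\sum_{j=1}^{3}c_{5,j}c_{5,j+1}=5!\,\alpha_3(\tfrac12)/2!$ and $c_{5,1}c_{5,2}c_{5,3}c_{5,4}=5!\,\alpha_5(\tfrac12)$ yields $c_{5,2}^2=-1$ together with $4c_{5,2}^4=\tfrac32$ — a contradiction. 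This leaves only $t\in\{0,1\}$, which proves the proposition.

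The main obstacle is exactly that the crude parameter count — $p(k)$ coefficient equations against only the $k-1$ unknowns $c_{k,1},\dots,c_{k,k-1}$ — gives no contradiction for $k\le3$, and, because of the accidental degeneracy, is still solvable at $k=4$ for the single value $t=\tfrac12$ (which is why the examples above can exhibit a $k=4$ determinant for $t=\tfrac12$). One therefore has to carry the $k=4$ elimination far enough to isolate the set $\{0,1,\tfrac12\}$ and then push one step further to $k=5$, exploiting the special odd-only structure of $\zt^{1/2}_n(\{1\}_k)$. The remaining work — establishing the compositional expansion of the Hessenberg determinant and reading off the handful of coefficients with the correct ``breakpoint'' bookkeeping and signs — is routine but must be done carefully.
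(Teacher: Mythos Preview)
Your argument is correct and follows the same two-step strategy as the paper's proof: a determinantal representation at $k=4$ forces $t\in\{0,1,\tfrac12\}$, and then $t=\tfrac12$ is eliminated at $k=5$. The difference lies only in how the relevant coefficients are produced. The paper obtains $\ztt_n(\{1\}_4)$ and $\zt^{1/2}_n(\{1\}_5)$ by repeated stuffle manipulations and expands the $4\times4$ and $5\times5$ determinants by direct Laplace expansion; for $t\neq\tfrac12$ it solves for $c_2$ from the $\Ha{n}{4}$ and $(\Ha{n}{2})^2$ coefficients and arrives at the constraint $\tfrac{8t^2(t-1)^2}{(2t-1)(2t^2-2t+1)}=0$, which is exactly your $u^2=0$ with $u=t^2-t$. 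You instead package both sides systematically: the identity $\sum_{k\ge0}\ztt_n(\{1\}_k)\,q^k=\exp\bigl(\sum_{j\ge1}\alpha_j(t)\,\Ha{n}{j}\,q^j\bigr)$ delivers all the $\ztt_n$-coefficients at once, and the composition expansion of the Hessenberg determinant does the same on the other side. This buys cleanliness and a mechanism that scales to any $k$ (and, incidentally, your $k=5$ contradiction is spelled out in more detail than the paper's, which simply asserts that the resulting system has no solution); the paper's bare-hands computation has the virtue of requiring no auxiliary identities to justify.
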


\begin{proof}
We will show that for $k=4$, apart from the known cases $t=0$ and $t=1$, only the case $t=\frac12$ leads to such a determinant. Moreover, for $k=5$ even the case $t=\frac12$ does not allow a determinantal representation of this form, which thus proves the stated result.
%Furthermore, for $k=3$ we use the stuffle identity
%\[
%\zt_n(1)\zt_n(2)=\zt_n(1,2)+\zt_n(2,1)+\zt_n(3)
%\]
%to get
%\begin{align*}
%\ztt_n(\{1\}_3)&=\zt_n(\{1\}_3)+t(\zt_n(1,2)+\zt_n(2,1))+t^2\zt_n(3)\\
%&=\zt_n(\{1\}_3)+t(\zt_n(1)\zt_n(2)-\zt_n(3))+t^2\zt_n(3)\\
%%&=\frac{x_1^3-3x_1x_2+2x_3+6t(x_1x_2-x_3)+6t^2x_3}6.
%&=\frac16\Big[H_n^3+(6t-3)H_n\Ha{n}{2}+(6t^2-6t+2)\Ha{n}3\Big]\\
%&=\frac1{3!}
%\left|
%\begin{matrix}
%H_n & c_{1} &0\\
%\Ha{n}{2} & H_n & c_2\\
%\Ha{n}{3} & \Ha{n}{2} & H_n\\
%\end{matrix}
%\right|,
%\end{align*}
%with $c_1=c_1(t)$ and $c_2=c_2(t)$ given by
%\[
%-c_1-c_2=(6t-3), \quad c_1c_2= (6t^2-6t+2),
%\]
%such that
%\[
 %X^2 +(6t-3)X+ (6t^2-6t+2)=0
%c_{1,2}=-\frac{6t-3}2 \pm\sqrt{\frac{36t^2-36t+9}{4}-(6t^2-6t+2)} 
%\]
%and further
%\[
%c_{1,2}(t)=-\frac{6t-3}2 \pm\frac{\sqrt{12t^2-12t+1}}{2}. 
%\]

For $k=4$ we %use the stuffle identity,
%\[
%\zt_n(1)\zt_n(\{1\}_3)=4\zt_n(\{1\}_4)+\zt_n(1,1,2)+\zt_n(1,2,1)+\zt_n(1,1,2)
%\]
%as well as
%\[
%\zt_n(1)\zt_n(3)=\zt_n(1,3)+\zt_n(3,1)+\zt_n(4), 
%\]
%to 
get after a repeated use of the stuffle formula eventually
\begin{align*}
\ztt_n(\{1\}_4) & = t^{3} \zt_{n}(4) + t^{2} \big[\zt_{n}(3,1)+\zt_{n}(2,2)+\zt_{n}(1,3)\big]\\
& \quad \mbox{} + t \big[\zt_{n}(2,1,1)+\zt_{n}(1,2,1)+\zt_{n}(1,1,2)\big] + \zt_{n}(\{1\}_{4})\\
%\zt_n(\{1\}_4)+t\big[(\zt_n(1)\zt_n(\{1\}_3)-4\zt_n(\{1\}_4)\big]\\
%&+t^2\big[\zt_n(1)\zt_n(3)+\frac12 \zt_n^2(2)-\frac32\zt_n(4)\big]+t^3\zt_n(4)\\
& = \frac1{24}\Big(H_n^4 + 6(2t-1)H_n^2\Ha{n}{2} + 8(3t^2-3t+1)H_n\Ha{n}3 + 3(2t-1)^{2}(\Ha{n}2)^2\\
&\quad \mbox{} + 6(2t-1)(2t^{2}-2t+1)\Ha{n}4\Big).
\end{align*}
Assume that
\[
\ztt_n(\{1\}_4)=\frac1{4!}
\left|
\begin{matrix}
H_n & c_{1}(t) &0&0\\
\Ha{n}{2} & H_n & c_{2}(t)&0\\
\Ha{n}{3} & \Ha{n}{2} & H_n&c_{3}(t)\\
\Ha{n}{4} & \Ha{n}{3} & \Ha{n}{2}&H_n
\end{matrix}
\right|.
\]
By Laplace expansion we readily obtain
\begin{equation*}
\frac{1}{24}\big[H_n^4-(c_{1}+c_{2}+c_{3})H_n^2\Ha{n}{2} +(c_{1} c_{2} + c_{2} c_{3})H_n\Ha{n}{3} + c_{1} c_{3} (\Ha{n}{2})^2 - c_{1} c_{2} c_{3} \Ha{n}{4}\big].
\end{equation*}
For $t\neq \frac12$ we obtain, by comparing coefficients of $(\Ha{n}{2})^2$ and $\Ha{n}{4}$, that $c_{2} = -\frac{2(2t^{2}-2t+1)}{2t-1}$; using this and comparing coefficients of $H_n^2\Ha{n}{2}$ and $H_n\Ha{n}{3}$ yields the following necessary equation, which is valid only for the cases $t=0$ and $t=1$:
\[
\frac{8t^{2} (t-1)^2}{(2t-1)(2t^2-2t+1)}=0.
\]
For $t=\frac12$ we get
\[
\zt^{\frac12}_n(\{1\}_4)=\frac1{24}\Big[H_n^4+2H_n\Ha{n}3\Big],
\]
and thus the condition $c_{1} c_{2} c_{3}=0$, with $c_{2}\neq 0$. Choosing $c_{1}=0$ we get
$c_{2} + c_{3}=0$, $c_{2} c_{3}=2$, and thus $c_{2}=i \sqrt{2}$, $c_{3}=-i \sqrt{2}$, leading to the expression stated before.

%\[
%\zt^{\frac12}_n(\{1\}_4)=\frac1{24}\big(H_n^4+2H_n\Ha{n}3\big)
%=\frac1{4!}
%\left|
%\begin{matrix}
%H_n & c_{1}(t) &0&0\\
%\Ha{n}{2} & H_n & c_2(t)&0\\
%\Ha{n}{3} & \Ha{n}{2} & H_n&c_3(t)\\
%\Ha{n}{4} & \Ha{n}{3} & \Ha{n}{2}&H_n
%\end{matrix}
%\right|.
%\]
%

%=H_n/4![ H_n^3+c2c3\Ha{n}{3} -H_n\Ha{n}{2}(c2+c3)]
%-c1/4![\Ha{n}{2}H_n^2 +c2c3 \Ha{n}{4} -c3\Ha{n}{2}^2-c2H_n\Ha{n}{3}]
%%
%=1/4![H_n^4-(c1+c2+c3)H_n^2\Ha{n}{2} +(c1c2+c2c3)H_n\Ha{n}{3} +c1c3\Ha{n}{2}^2-c1c2c3\Ha{n}{4}]
%=1/4![H_n^4-(c1+c2+c3)H_n^2\Ha{n}{2} +(c1c2+c2c3)H_n\Ha{n}{3} +c1c3\Ha{n}{2}^2-c1c2c3\Ha{n}{4}]
%
%Fall1: c_1=0 -> c2+c3=0, c2*c3=2, c2=i*sqrt(2), c3=-i*sqrt(2)

%Case $5=5$. 
%We have
%\begin{align*}
%\zt_n^{t}(\{1\}_5)&=\zt_n(\{1\}_5)\\
%&+t(\zt_n(1,1,1,2)+\zt_n(1,1,2,1)+\zt_n(1,2,1,1)+\zt_n(2,1,1,1))\\
%&+t^2(\zt_n(2,2,1)+\zt_n(2,1,2)+\zt_n(1,2,2))\\
%&+t^2(\zt_n(1,1,3)+\zt_n(1,3,1)+\zt_n(3,1,1))\\
%&+t^3(\zt_n(3,2)+\zt_n(2,3)+\zt_n(1,4)+\zt_n(4,1))\\
%&+t^4(\zt_n(5).
%\end{align*}
%
%We use the stuffle identity,
%\[
%\zt_n(1)\zt_n(\{1\}_4)=5\zt_n(\{1\}_5)+\zt_n(1,1,1,2)+\zt_n(1,1,2,1)+\zt_n(1,1,2,1)+\zt_n(1,2,1,1)+\zt_n(2,1,1,1)
%\]
%as well as
%\[
%\zt_n(1)\zt_n(2,2)=\zt_n(2,3)+\zt_n(3,2)+\zt_n(2,2,1)+\zt_n(2,1,2)+\zt_n(1,2,2).
%\]
%and
%\[
%\zt_n(3)\zt_n(1,1)=\zt_n(1,1,3)+\zt_n(1,4)+\zt_n(1,3,1)+\zt_n(4,1)+\zt_n(3,1,1), 
%\]
%Finally, using 
%\[
%\zt_n(a)\zt_n(b)=\zt(a,b)+\zt(b,a)+\zt(a+b)
%\]
%we can simplify 
%\[
%\zt_n(3,2)+\zt_n(2,3)=\zt_n(2)\zt_n(3)-\zt_n(5)
%\]
%as well as
%\[
%\zt_n(4,1)+\zt_n(1,4)=\zt_n(1)\zt_n(4)-\zt_n(5)
%\]

For $k=5$ we get for $t=\frac12$, after lengthy computations, the result
\[
\ztt_n(\{1\}_5)=\frac1{80}\Ha{n}{5}+ \frac1{24}H_n^2\Ha{n}{3}+\frac1{120}H_n^5.
\]
Expansion of the determinant gives
\begin{align*}
&\frac1{5!}
\left|
\begin{matrix}
H_n & c_{1} &0&0&0\\
\Ha{n}{2} & H_n & c_{2}&0&0\\
\Ha{n}{3} & \Ha{n}{2} & H_n&c_{3}&0\\
\Ha{n}{4} & \Ha{n}{3} & \Ha{n}{2}&H_n&c_4\\
\Ha{n}{5}& \Ha{n}{4} & \Ha{n}{3} & \Ha{n}{2}&H_n\\
\end{matrix}
\right|=\frac1{5!}\bigg(H_n^5-(c_1+c_2+c_3+c_4)H_n^3\Ha{n}{2}\\
&+(c_1c_2+c_2c_3+c_3c_4)H_n^2\Ha{n}{3}
-c_2c_3(c_1+c_4)H_n\Ha{n}{4}\\
& + (c_1c_3+c_1c_4+c_2c_4)H_n(\Ha{n}{2})^2
-c_1c_4(c_2+c_3)\Ha{n}{2}\Ha{n}{3}+c_1c_2c_3c_4\Ha{n}{5}\bigg).
\end{align*}
Comparing coefficients leads to a system of equations for $c_1(t)$, $c_2(t)$, $c_3(t)$, $c_4(t)$ without any solution.
\end{proof}

\medskip

Finally, we mention that for the general formula with $t=\frac12$ we get a truncated analog of the formula of Hoffman and Ihara~\cite[eqn.~$(41)$]{HI}:
\begin{align*}
\zt^{\frac12}_n(\{1\}_k)
&=\sum_{m_1+3m_3+5m_5\dots=k}\frac{2^{m_1+m_3+m_5+\dots}}{2^{n} m_1!m_3!\dots}\Big(\frac{\Ha{n}{1}}{1}\Big)^{m_1} \Big(\frac{\Ha{n}{3}}{3}\Big)^{m_3}\dots\\
&=\frac{1}{2^n(k-1)!}(B_{k-1}(0!\cdot 2\Ha{n}{1},0,2!\cdot 2\Ha{n}{3},0,\dots).
\end{align*}
It can be proven using the algebraic setup of Hoffman and Ihara, which leads to a generating series of the $\zt^{\frac12}_n(\{1\}_k)$. 
Interestingly, all even generalized harmonic numbers $\Ha{n}{2\ell}$ vanish. Their methods also give a formula for arbitrary $t$.


\begin{thebibliography}{9}
\bibitem{A}
V. Adamchik, On Stirling number and Euler sums, \emph{J. Comput. Appl.
Math.} {\bf 79}, 119--130, 1997.
\bibitem{AK1999}
T.~Arakawa and M.~Kaneko, Multiple zeta values, poly-Bernoulli numbers, and related
zeta functions. \emph{Nagoya Math. J.} {\bf 153}, 189--209, 1999.
\bibitem{B}
N. Batir, On some combinatorial identities and harmonic sums, \emph{Int. J. Number Theory} {\bf 13}, 1695--1709, 2017. 
\bibitem{Bai}
Z.-D.~Bai, L.~Devroye, H.~-K.~Hwang, T.~-H.~Tsai, Maxima in hypercubes, \emph{Random Structures and Algorithms} {\bf 27}, 290--309, 2005.
\bibitem{BBB}
J. M. Borwein, D. M. Bradley, and D. J. Broadhurst, Evaluation of
$k$-fold Euler/Zagier sums:  a compendium of results for arbitrary $k$,
\emph{Electron. J. Combin.} {\bf 4(2)}, res. art. 5 (21 pp.), 1997.
\bibitem{Br}
D.~M.~Bradley, Duality for finite multiple harmonic $q$-series, \emph{Discrete Mathematics}
{\bf 300}, 44--56, 2004.
\bibitem{Ch}
J. Choi, Summation formulas involving binomial coefficients, harmonic
numbers, and generalized harmonic numbers, \emph{Abstr. Appl. Anal.}, Art. ID~501906 (10 pp.), 2014.
\bibitem{D}
K. Dilcher, Some q-series identities related to divisor functions, \emph{Discrete Math.} {\bf 145}, 83--93, 1995.
\bibitem{FS}
P. Flajolet and R. Sedgewick, Mellin transforms and asymptotics: Finite differences and Rice’s integrals, \emph{Theoretical Computer Science} {\bf 144}, 101--124, 1995.
\bibitem{GKP}
R. L. Graham, D. E. Knuth, and O. Patashnik, \emph{Concrete Mathematics},
2nd ed., Addison-Wesley, New York, 1994.
\bibitem{G}
A. Granville, A decomposition of Riemann's zeta-function. Analytic
number theory (Kyoto, 1996), 95--101, \emph{London Mathematical Society Lecture
Note Series} {\bf 247}, Cambridge University Press, Cambridge, 1997
\bibitem{H92}
M. E. Hoffman, Multiple harmonic series, \emph{Pacific J. Math.} {\bf 152}, 275--290, 1992.
\bibitem{H1997}
M. E. Hoffman, The algebra of multiple harmonic series, \emph{J. Algebra} {\bf 194}, 477--495, 1997. 
\bibitem{H2004}
M. E. Hoffman, Quasi-symmetric functions and mod $p$ multiple harmonic sums, \emph{Kyushu Journal of Mathematics} {\bf 69}, 345--366, 2015. ArXiv:math.NT/0401319.
\bibitem{H2017}
M. E. Hoffman, Harmonic-number summation identities, symmetric functions,
and multiple zeta values, \emph{Ramanujan J.} {\bf 42}, 501--526, 2017.
\bibitem{HI}
M. E. Hoffman and K. Ihara, Quasi-shuffle products revisited, \emph{Journal of Algebra} {\bf 481}, 293--326, 2017.
\bibitem{H2018+}
M. E. Hoffman, M. Kuba, M. Levy and G. Louchard, An Asymptotic Series for an Integral, submitted.
\bibitem{Ka}
G.~Kawashima, A class of relations among multiple zeta values, \emph{J. Number Theory} {\bf 129}, 755--788, 2009.
\bibitem{KaSa}
M. Kaneko and M. Sakata, On multiple zeta values of extremal height, \emph{Bull. Aust. Math. Soc.} {\bf 93}, 186--193, 2016.
\bibitem{K}
M. Kuba, On functions of Arakawa and Kaneko and Multiple zeta functions,
\emph{Applicable Analysis and Discrete Mathematics.} {\bf 4}, 45--53, 2010.
\bibitem{PK2010}
M. Kuba and H. Prodinger, A note on Stirling series, \emph{Integers - Electronic journal of Combinatorial number theory} {\bf 10}, art.~A34, 393--406, 2010.
\bibitem{LW2018}
Y.-H.~Lyu and W.~P. Wang , Euler sums and Stirling sums, \emph{Journal of Number Theory} {\bf 185}, 160--193, 2018.
\bibitem{MacDo}
I. G. MacDonald, \emph{Symmetric Functions and Hall Polynomials}, 2nd ed., Clarendon Press, Oxford, 1995.
\bibitem{PP}
A.~Panholzer and H.~Prodinger, Computer-free Evaluation of an Infinite Double Sum via Euler Sums,
S\'eminaire Lotharingien de Combinatoire {\bf 55}, art.~B55a (3 pp.), 2005.
\bibitem{O}
Y.~Ohno, A generalization of the duality and the sum formula on the multiple zeta values, \emph{J. Number Th.} {\bf 74}, 39--43, 1999.
\bibitem{Ohno}
Y.~Ohno and W.~Zudilin, Zeta stars, \emph{Communications in number theory and physics} {\bf 2}, 325--347, 2008.
\bibitem{S}
J. Spie\ss{}, Some identities involving harmonic numbers, \emph{Math. Comp.} {\bf 55}, 839--863, 1990.
\bibitem{V}
J. A. M. Vermaseren, Harmonic sums, Mellin transforms and integrals, \emph{Int. J. Mod.
Phys. A} {\bf 14}, 2037--2076, 1999.
\bibitem{Y}
S. Yamamoto, Interpolation of multiple zeta and zeta-star values, \emph{J. Algebra} {\bf 385}, 102--114, 2013.
\end{thebibliography}
\end{document}